\documentclass[12pt]{amsart}
\synctex=1
\usepackage{amsmath,amsfonts}
\usepackage{amscd}
\usepackage{amsthm}

\usepackage{geometry}                
\geometry{a4paper}                       
\usepackage{enumerate}
\usepackage{amssymb}

\pagestyle{plain}

\newtheorem{theorem}{Theorem}

\newtheorem{lemma}[theorem]{Lemma}
\newtheorem{corollary}[theorem]{Corollary}
\theoremstyle{remark}

\newtheorem{remarks}[theorem]{Remarks}
\theoremstyle{definition}
\newtheorem{definition}[theorem]{Definition}

\newcommand{\horbun}{H}
\newcommand{\dist}{\varrho}

\newcommand{\R}{\mathbb R}

\newcommand{\Z}{\mathbb Z}

\newcommand{\lnorm}{\left\Vert}
\newcommand{\rnorm}{\right\Vert}
\newcommand{\biglnorm}{\bigl\Vert}
\newcommand{\bigrnorm}{\bigr\Vert}

\newcommand{\lset}{\left\{}
\newcommand{\rset}{\right\}}
\newcommand{\lpar}{\left(}
\newcommand{\rpar}{\right)}
\renewcommand{\lbrack}{\left[}
\renewcommand{\rbrack}{\right]}
\newcommand{\lip}{\left<}
\newcommand{\rip}{\right>}
\newcommand{\biglpar}{\bigl(}
\newcommand{\bigrpar}{\bigr)}
\newcommand{\rest}[1]{\bigl|_{#1}}

\newcommand{\Lie}[1]{{\mathfrak{#1}}}
\newcommand{\Centre}{\mathfrak{Z}}


\newcommand{\ad}{\operatorname{ad}}

\newcommand{\Aut}{\operatorname{Aut}}

\newcommand{\Der}{\operatorname{Der}}
\newcommand{\ConfDer}{\operatorname{ConfDer}}
\newcommand{\IsoDer}{\operatorname{IsoDer}}
\newcommand{\Prol}{\operatorname{Prol}}

\newcommand{\ConfAut}{\operatorname{ConfAut}}
\newcommand{\Ad}{\operatorname{Ad}}

\hyphenation{Lip-schitz}

\newcommand{\flow}[1]{\phi_t^{#1}}
\newcommand{\dotflow}[1]{\dot\phi_t^{#1}}

\numberwithin{theorem}{section}
\numberwithin{equation}{section}

\begin{document}

\title{Conformal maps of Carnot groups}
\author{Michael G.\ Cowling}
\address{School of Mathematics and Statistics\\ University of New South Wales\\UNSW Sydney 2052\\ Australia}
\author{Alessandro Ottazzi}
\address{CIRM, Fondazione Bruno Kessler\\Via Sommarive 15\\I-38123 Trento}

\begin{abstract}
If $f$ is a conformal mapping defined on a connected open subset $\Omega$ of  a Carnot group $G$, then either $f$ is the composition of a translation, a dilation and an isometry, or $G$ is the nilpotent Iwasawa component of a real rank $1$ simple Lie group $S$, and $f$ arises from the action of $S$ on $G$, viewed as an open subset of $S/P$, where $P$ is a parabolic subgroup of $G$ and $NP$ is open and dense in $S$.
\end{abstract}

\keywords{Carnot groups, conformal mappings, Tanaka prolongation}
\subjclass[2010]{primary: 57S20; secondary: 30L10, 35R03, 53C23}
\maketitle

\section{Introduction}

The study of conformal and quasi\-conformal maps on nilpotent Lie groups began with G.~D.~Mostow's work on strong rigidity \cite{Mostow}.
P.~Pansu \cite{Pansu} developed the theory, defining Carnot groups and Carnot--Carath\'eodory spaces, and showed that in some cases, quasi\-conformal maps are automatically conformal.
Later, M.~Gromov used Carnot groups to describe the structure of groups of polynomial growth \cite{Gromov}.
In a parallel development, N. Tanaka (and his school) developed the theory of prolongation of graded Lie algebras \cite{Tanaka2} with a view to studying the equivalence of CR manifolds.
Tanaka's theory of prolongations underpins much work on parabolic geometry---see, for instance, \cite{Yamaguchi}.
In a second parallel development, E.~M.~Stein and G.~B.~Folland \cite{Folland-Stein74, Folland-Stein} (and others), developed analysis on stratified groups; in a third development, non-Riemannian geometry arose in the study of the mechanics of nonholonomic systems \cite{Bellaiche, Montgomery}.
Carnot groups are now a topic of considerable interest in their own right---see, for example, \cite{Balogh-et-al, Balogh-Tyson-Warhurst, Capogna-Cowling, Capogna-Pauls-Tyson, Danielli-Garofalo-Nhieu-Adv}.

In this paper, we consider conformal maps on Carnot groups.
Ottazzi and B.~Warhurst \cite{Ottazzi_Warhurst} have already shown that the vector space of vector fields that generate conformal local flows is finite-dimensional.
We make this more precise, and show that either the Carnot group under consideration  is the Iwasawa $N$ group of a real-rank-one noncompact simple Lie group, or all the conformal maps are affine, that is, compositions of  automorphisms and left translations.

In Section 2, we describe Carnot groups and Tanaka prolongation.
Carnot groups are characterised by an inner product on the first layer of the stratification of their Lie algebras; we show that this inner product has a canonical extension to the whole Lie algebra.
In Section 3, we characterise vector fields that generate a local flow of conformal mappings.
and show that either they form a real-rank-one simple Lie algebra, or they generate affine flows only.
This enables us to show, in Section 4, that every conformal mapping defined on a connected open subset $\Omega$ of $G$ is either the restriction to $\Omega$ of an affine map or it extends real analytically to a conformal map of $G\setminus\{p\}$, for a single point $p$.
The latter case occurs only if $G$  is one of the Iwasawa $N$ groups of a real-rank-one simple Lie group.

\section{Carnot groups and Tanaka prolongations}

We review the definitions of Carnot groups and Tanaka prolongations.
We first consider stratified Lie groups and their Lie algebras from an algebraic point of view, and then equip a stratified Lie group with the Carnot--Carath\'eodory distance to obtain a Carnot group.
We then look at some special maps of Carnot groups and their Lie algebras.

\subsection{Stratified Lie groups and algebras}
Let $G$ be a stratified Lie group of step $s$.
This means that $G$ is connected and simply connected, and its Lie algebra $\Lie{g}$ admits an $s$-step stratification:
\[
\Lie{g}= \Lie{g}_{-1}\oplus \cdots\oplus \Lie{g}_{-s},
\]
where $[\Lie{g}_{-j}, \Lie{g}_{-1}] =\Lie{g}_{-j-1}$ when $1\leq j \leq s$, while  $\Lie{g}_{-s}\neq \{0\}$ and $\Lie{g}_{-s-1}=\{0\}$; this implies that $\Lie{g}$ is nilpotent.
To avoid degenerate cases, we assume that the dimension of $G$ is at least $3$.
The identity of $G$ is written $e$, and we view the Lie algebra $\Lie{g}$ as the tangent space at the identity $e$.

For each $t \in \R^+$, the dilation $\delta_t : \Lie{g} \to \Lie{g}$ is defined by setting $\delta_t(X):=t^jX$ for every $X\in \Lie{g}_{-j}$  and every $j=1,\dots,s$, and then extending to $\Lie{g}$ by linearity.
The dilation  $\delta_t$ is an automorphism of $\Lie{g}$.
We also write $\delta_t$ for the automorphism of $G$ given by $\mathrm{exp} \circ {\delta_t} \circ \mathrm{exp}^{-1}$.

The \emph{descending central series} of $\Lie{g}$ is defined inductively, by $\Lie{g}^{(1)} = \Lie{g}$, and then
\[
\Lie{g}^{(j+1)} = [\Lie{g}, \Lie{g}^{(j)}]
\]
when $j \geq 1$.
We define subspaces $\Lie{g}^{[j+]}$ and $\Lie{g}^{[j-]}$ of $\Lie{g}$ by
\[
\Lie{g}^{[j+]} = \sum_{k \geq j} \Lie{g}_{-k}
\quad\text{and}\quad
\Lie{g}^{[j-]} = \sum_{k \leq j} \Lie{g}_{-k} .
\]
The dimension of $\Lie{g}^{[j-]}$ is written $d_j$.\
In the following lemmas, we present some well known properties of stratified Lie algebras, including the (short) proofs for the reader's convenience.
\
\begin{lemma}\label{lem:all-automorphisms}
Suppose that $T$ is a homomorphism of the stratified Lie algebra $\Lie{g}$.
Then $T(\Lie{g}^{[j+]}) \subseteq  \Lie{g}^{[j+]}$. 
\end{lemma}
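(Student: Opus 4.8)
The plan is to identify $\Lie{g}^{[j+]}$ with the $j$-th term of the descending central series of $\Lie{g}$, and then to observe that every Lie algebra homomorphism preserves that series.

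First I would show by induction on $j$ that $\Lie{g}^{(j)} = \Lie{g}^{[j+]}$ for every $j \geq 1$. The case $j = 1$ is just $\Lie{g}^{(1)} = \Lie{g} = \Lie{g}^{[1+]}$. For the inductive step, suppose $\Lie{g}^{(j)} = \Lie{g}^{[j+]} = \sum_{k \geq j} \Lie{g}_{-k}$. Since $[\Lie{g}_{-i}, \Lie{g}_{-k}] \subseteq \Lie{g}_{-i-k}$ and the conditions $i \geq 1$, $k \geq j$ force $i + k \geq j+1$, we obtain $\Lie{g}^{(j+1)} = [\Lie{g}, \Lie{g}^{(j)}] \subseteq \sum_{m \geq j+1} \Lie{g}_{-m} = \Lie{g}^{[(j+1)+]}$. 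For the reverse inclusion, the bracket-generating hypothesis $[\Lie{g}_{-k}, \Lie{g}_{-1}] = \Lie{g}_{-k-1}$ (for $1 \leq k \leq s$) gives, for each $m$ with $j+1 \leq m \leq s$, that $\Lie{g}_{-m} = [\Lie{g}_{-(m-1)}, \Lie{g}_{-1}] \subseteq [\Lie{g}^{[j+]}, \Lie{g}] = \Lie{g}^{(j+1)}$, since $m - 1 \geq j$. Hence $\Lie{g}^{(j)} = \Lie{g}^{[j+]}$.

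Next I would verify that a Lie algebra homomorphism $T \colon \Lie{g} \to \Lie{g}$ satisfies $T(\Lie{g}^{(j)}) \subseteq \Lie{g}^{(j)}$ for all $j \geq 1$, again by induction. Indeed $T(\Lie{g}^{(1)}) \subseteq \Lie{g} = \Lie{g}^{(1)}$, and if $T(\Lie{g}^{(j)}) \subseteq \Lie{g}^{(j)}$, then, using that $T$ preserves brackets, $T(\Lie{g}^{(j+1)}) = T([\Lie{g}, \Lie{g}^{(j)}]) = [T(\Lie{g}), T(\Lie{g}^{(j)})] \subseteq [\Lie{g}, \Lie{g}^{(j)}] = \Lie{g}^{(j+1)}$. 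Combining the two steps yields $T(\Lie{g}^{[j+]}) = T(\Lie{g}^{(j)}) \subseteq \Lie{g}^{(j)} = \Lie{g}^{[j+]}$, which is the assertion of the lemma.

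No step here is a genuine obstacle. The only point that needs a little attention is proving both inclusions in $\Lie{g}^{(j)} = \Lie{g}^{[j+]}$: the inclusion $\subseteq$ uses only compatibility of the bracket with the grading, while $\supseteq$ genuinely invokes the bracket-generating property of the stratification. Equivalently, one can simply remark that the descending central series is characteristic---preserved by every endomorphism of $\Lie{g}$---and that it coincides term by term with the filtration $(\Lie{g}^{[j+]})_{j \geq 1}$ induced by the stratification.
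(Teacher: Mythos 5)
Your proposal is correct and follows exactly the paper's argument: identify $\Lie{g}^{[j+]}$ with the term $\Lie{g}^{(j)}$ of the descending central series and invoke the fact that Lie algebra homomorphisms preserve that series. You have simply written out the induction that the paper declares ``easy,'' correctly noting that the inclusion $\Lie{g}^{[j+]} \subseteq \Lie{g}^{(j)}$ is where the bracket-generating property of the stratification is used.
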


\begin{proof}
It is easy to prove by induction that $\Lie{g}^{(j)} =  \Lie{g}^{[j+]}$.
Homomorphisms of Lie algebras preserve the descending central series.
\end{proof}

Throughout, we write $\Omega$ for an arbitrary nonempty connected open subset of $G$.
The differential of a differentiable map $f : \Omega \to G$ is written $f_*$.
We denote by $L_p$ the left translation by $p$ in $G$, that is, $L_p q = pq$ for all $q\in G$.
Each $X$ in $\Lie{g}$ then induces a left-invariant vector field, denoted $\tilde X$,  equal to $(L_p)_*(X)$ at each point $p \in G$.
The set $\tilde{\Lie{g}}$ of all left-invariant vector fields with vector field commutation is isomorphic to $\Lie{g}$, and it inherits the  stratification of $\Lie{g}$.
The subbundle  $\horbun  G$ of the tangent bundle $TG$, where $\horbun_p= (L_p)_*(\Lie{g}_{-1})$, is called the \emph{horizontal distribution}.
We say that a smooth map $f: \Omega \to G$ is \emph{a contact map} if $f_*$ preserves $\horbun G$.

\begin{lemma}
Suppose that $e \in \Omega$, and that $f: \Omega \to G$ is a contact map on a stratified Lie group $G$.
Then
\begin{equation}\label{eq:action-of-f_*}
(f_*)_e (\Lie{g}_{-j}) \subseteq   \Lie{g}^{[j-]}
\end{equation}
for each positive integer $j$.
\end{lemma}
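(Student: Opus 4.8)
The plan is to argue by induction on $j$, establishing the equivalent statement that $(f_*)_e$ carries $\Lie{g}^{[j-]}$ into $\Lie{g}^{[j-]}$ for every positive integer $j$; throughout I identify $\Lie{g}$ with $T_eG$, and also with $T_{f(e)}G$ by means of $L_{f(e)}$, so that $(f_*)_e$ is read as a linear endomorphism of $\Lie{g}$. The equivalence with \eqref{eq:action-of-f_*} is immediate, since $\Lie{g}^{[j-]}=\Lie{g}_{-1}\oplus\dots\oplus\Lie{g}_{-j}$. The base case $j=1$ is exactly the contact hypothesis: $\Lie{g}^{[1-]}=\Lie{g}_{-1}=\horbun_e$, and $(f_*)_e$ maps $\horbun_e$ into $\horbun_{f(e)}$, which in the chosen identification is $\Lie{g}_{-1}$.

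For the inductive step I would realise $\Lie{g}^{[(j+1)-]}$ by means of iterated brackets of horizontal vector fields. Let $\horbun^kG$ denote the subbundle of $TG$ given by $\horbun^1G=\horbun G$ and, inductively, by letting $\horbun^{k+1}G$ be the fibrewise span of $\horbun^kG$ together with all $[V,W]$ in which $V$ is a local section of $\horbun G$ and $W$ a local section of $\horbun^kG$. Using the left-invariant fields $\tilde X$ with $X\in\Lie{g}_{-1}$, the relation $[\Lie{g}_{-k},\Lie{g}_{-1}]=\Lie{g}_{-(k+1)}$, and the Leibniz rule (which shows that arbitrary sections yield nothing beyond what left-invariant ones do), one checks that $\horbun^k_pG=(L_p)_*\,\Lie{g}^{[k-]}$ for every $p$; in particular $\horbun^kG$ has constant rank and $\horbun^k_eG=\Lie{g}^{[k-]}$. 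Granting this, the inductive step is precisely the assertion that $f_*$ carries $\horbun^{j+1}G$ into $\horbun^{j+1}G$. Now $f_*$ carries $\horbun G$ into $\horbun G$ by contactness; and if $W$ is a local section of $\horbun^jG$ then, for $p$ near $e$, the vector $(f_*)_pW_p$ equals $\bigl((f\circ L_p)_*\bigr)_e$ applied to $(L_{p^{-1}})_*W_p\in\Lie{g}^{[j-]}$, and $f\circ L_p$ is again a contact map taking $e$ to $f(p)$, so the inductive hypothesis gives $(f_*)_pW_p\in\horbun^j_{f(p)}G$. Hence $f_*$ pushes local sections of $\horbun G$ (resp.\ of $\horbun^jG$) forward to local sections of $\horbun G$ (resp.\ of $\horbun^jG$), and since $f_*$ intertwines Lie brackets of vector fields it pushes local sections of $\horbun^{j+1}G$ forward to local sections of $\horbun^{j+1}G$. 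Evaluating at $e$ completes the induction.

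The delicate point --- where I expect the real content to sit --- is the clause ``$f_*$ intertwines Lie brackets of vector fields'', which strictly speaking requires $f$ to be a local diffeomorphism, so that $f_*V$ is itself a vector field; for conformal maps this is automatic, since conformal maps are immersions. To keep the lemma for an arbitrary smooth contact map one should instead argue pointwise: given $Z\in\Lie{g}_{-(j+1)}$, write $Z$ as a sum of $(j+1)$-fold iterated brackets of elements of $\Lie{g}_{-1}$, replace each bracket by the associated commutator word of size $\epsilon$ in the one-parameter subgroups $u\mapsto\exp(uX)$ with $X\in\Lie{g}_{-1}$, and observe --- for instance by covariance under the dilations $\delta_\lambda$ --- that the resulting curve $\gamma$, taken with $\epsilon=t^{1/(j+1)}$, satisfies $\gamma(0)=e$ and $\gamma(t)=\exp\bigl(tZ+o(t)\bigr)$, so that $\gamma'(0^+)=Z$, while $\gamma(t)$ is joined to $e$ by a horizontal path of length $O\bigl(t^{1/(j+1)}\bigr)$. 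Since a smooth contact map is locally Lipschitz for the Carnot--Carath\'eodory distance, $f(\gamma(t))$ lies in the Carnot--Carath\'eodory ball of radius $O\bigl(t^{1/(j+1)}\bigr)$ about $f(e)$, whence, by the ball--box description of such a ball, its $\Lie{g}_{-k}$-coordinate is $O\bigl(t^{k/(j+1)}\bigr)=o(t)$ for every $k>j+1$. Differentiating at $t=0$ then shows that $(f_*)_eZ=(f\circ\gamma)'(0^+)$ has vanishing $\Lie{g}_{-k}$-component for all $k>j+1$, that is, $(f_*)_eZ\in\Lie{g}^{[(j+1)-]}$, as required.
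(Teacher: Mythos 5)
Your argument is correct, and its first half is essentially the paper's own proof in different packaging: the paper also inducts on the layer, writes $f_*\tilde X=\sum_{k\le d_1}a_k\tilde X_k$ and $f_*\tilde Y=\sum_{k'\le d_j}b_{k'}\tilde X_{k'}$ in a left-invariant frame, and expands $f_*[\tilde X,\tilde Y]=[f_*\tilde X,f_*\tilde Y]$, observing that the derivative-of-coefficient terms stay in $\tilde{\Lie{g}}^{[j-]}$ while the terms $a_kb_{k'}[\tilde X_k,\tilde X_{k'}]$ land in $\tilde{\Lie{g}}^{[(j+1)-]}$; your bracket-generated flag $\horbun^kG$ with $\horbun^k_pG=(L_p)_*\Lie{g}^{[k-]}$ is just a coordinate-free formulation of the same computation. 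Where you genuinely diverge is in isolating the hypothesis needed to make ``$f_*$ intertwines brackets'' legitimate: the paper's proof has exactly the same implicit requirement, since writing $f_*\tilde X$ as a vector field with smooth coefficients $a_k$ on the target presupposes that $\tilde X$ admits an $f$-related field, i.e.\ in effect that $f$ is a local diffeomorphism --- harmless for the conformal maps the paper cares about, but a real restriction for an arbitrary smooth contact map. Your fallback via commutator words, the local Lipschitz property of contact maps for the Carnot--Carath\'eodory distance, and the ball--box estimate is a genuinely different, purely metric route that removes that restriction (and would even cover maps that are merely Pansu differentiable); the price is importing the ball--box theorem, which the paper's elementary frame computation does not need. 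Both arguments are sound; yours buys generality and an honest accounting of the differentiability/invertibility hypotheses, the paper's buys brevity and self-containment.
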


\begin{proof}
We prove \eqref{eq:action-of-f_*} by induction on $j$.
Since $f$ is a contact map, $f_*$ preserves horizontal vector fields, and the result holds when $j = 1$.
Suppose now that $f_*$ sends $\tilde{\Lie{g}}_{-j}$ into $\tilde{\Lie{g}}^{[j-]}$ if $j < k$, and take $X$ in $\Lie{g}_{-1}$ and $Y$ in $\Lie{g}_{-j}$.
Then there are smooth functions $a_k$, where $1 \leq k \leq d_1$, and  $b_k$ where $1 \leq k \leq d_j$, such that
\[
 f_* \tilde X =  \sum_{k \leq d_1} a_k \tilde X_k
\quad\text{and}\quad
f_* \tilde Y =  \sum_{k' \leq d_j} b_{k'} \tilde X_{k'} .
\]
In the following formula, the indices run over the same ranges.
Clearly,
\begin{equation}\label{eq:contact-commutator}
\begin{aligned}
f_* ( [ \tilde X ,  \tilde Y ] )
&= [ f_* \tilde X ,  f_* \tilde Y ] \\
&=  \lbrack \sum a_k \tilde X_k , \sum b_{k'} \tilde X_{k'} \rbrack  \\
&= \sum a_k (\tilde X_k b_{k'}) \tilde X_{k'}
	- \sum b_{k'} (\tilde X_{k'}  a_{k}) \tilde X_{k} \\
&\qquad +  \sum a_k b_{k'}  [\tilde X_k , \tilde X_{k'} ] .
\end{aligned}
\end{equation}
Thus $f_*$ sends $\tilde{\Lie{g}}_{-j-1}$ into $\tilde{\Lie{g}}^{[(j+1)-]}$, and \eqref{eq:action-of-f_*} follows.
\end{proof}

The proof of the following complementary result is immediate.

\begin{lemma}\label{lem:good-automorphisms}
 Suppose that $T$ is a homomorphism of a stratified Lie  group $G$.
 The following are equivalent:
\begin{enumerate}[(i)]
\item
$T$ is a contact map;
\item
$dT({\Lie{g}}_{-k})  \subseteq \Lie{g}_{-k}$, for each positive integer $k$;
\item
$T$ commutes with dilations.
\end{enumerate}
\end{lemma}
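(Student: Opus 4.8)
The plan is to prove the cycle (i) $\Rightarrow$ (ii) $\Rightarrow$ (iii) $\Rightarrow$ (i), carrying out almost all of the work at the level of the Lie algebra endomorphism $dT$. Throughout I would use two elementary facts. First, since $G$ is connected, simply connected and nilpotent, $\exp\colon\Lie{g}\to G$ is a diffeomorphism with $T\circ\exp=\exp\circ\,dT$, so that an identity relating $dT$ to the differentials of other homomorphisms of $G$ holds exactly when the corresponding identity holds for the homomorphisms themselves. Second, because $T$ is a homomorphism, $T\circ L_p=L_{T(p)}\circ T$, hence $T_*$ at a point $p$ equals $(L_{T(p)})_*\circ dT\circ(L_p)_*^{-1}$; since $\horbun_p=(L_p)_*(\Lie{g}_{-1})$, this shows that $T$ is a contact map if and only if $dT(\Lie{g}_{-1})\subseteq\Lie{g}_{-1}$.

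For (i) $\Rightarrow$ (ii) I would induct on $k$. The base case $k=1$ is exactly the reformulation of the contact condition just noted. For the inductive step, $dT$ is a Lie algebra homomorphism and, by the stratification hypothesis, $[\Lie{g}_{-(k-1)},\Lie{g}_{-1}]=\Lie{g}_{-k}$; hence $dT(\Lie{g}_{-k})=[dT(\Lie{g}_{-(k-1)}),dT(\Lie{g}_{-1})]\subseteq[\Lie{g}_{-(k-1)},\Lie{g}_{-1}]=\Lie{g}_{-k}$.

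For (ii) $\Rightarrow$ (iii), I would note that $\delta_t$ acts on $\Lie{g}_{-k}$ as the scalar $t^k$, so for $X\in\Lie{g}_{-k}$ condition (ii) gives $dT(\delta_tX)=t^k\,dT(X)=\delta_t(dT(X))$; summing over $k$ yields $dT\circ\delta_t=\delta_t\circ dT$ on $\Lie{g}$, and therefore $T\circ\delta_t=\delta_t\circ T$ on $G$. For (iii) $\Rightarrow$ (i), I would differentiate to obtain $dT\circ\delta_t=\delta_t\circ dT$, fix some $t\neq1$, and observe that $\Lie{g}_{-1}$ is the eigenspace of $\delta_t$ for the eigenvalue $t$ (the powers $t,t^2,\dots,t^s$ being distinct); since $dT$ commutes with $\delta_t$ it preserves this eigenspace, so $dT(\Lie{g}_{-1})\subseteq\Lie{g}_{-1}$, which by the reformulation above means $T$ is a contact map.

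I do not expect any genuine obstacle here: the argument is essentially bookkeeping transferring statements between $G$ and $\Lie{g}$, in keeping with the remark that the proof is immediate. The only point that warrants an explicit sentence is this transfer itself, namely that identities among homomorphisms of the simply connected nilpotent group $G$ are equivalent to the corresponding identities among their differentials, which holds because $\exp$ is a diffeomorphism intertwining homomorphisms with their differentials.
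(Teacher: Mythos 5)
Your proof is correct: the reformulation of the contact condition as $dT(\Lie{g}_{-1})\subseteq\Lie{g}_{-1}$ via $T\circ L_p=L_{T(p)}\circ T$, the induction using $[\Lie{g}_{-(k-1)},\Lie{g}_{-1}]=\Lie{g}_{-k}$, and the eigenspace argument for $\delta_t$ are all sound. The paper gives no proof at all (it declares the result ``immediate''), and your argument is exactly the standard bookkeeping the authors have in mind, so there is nothing to compare beyond noting that you have supplied the omitted details correctly.
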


\subsection{Carnot groups}
We fix a scalar product $\lip\cdot,\cdot\rip$ on $\Lie{g}_{-1}$, and define a left-invariant scalar product on each horizontal space $\horbun_p$ by setting
\begin{align}
\lip V, W \rip_p = \lip (L_{p^{-1}})_*(V),(L_{p^{-1}})_*(W) \rip  \label{scalarprod}
\end{align}
for all $V, W \in \horbun_p$.
The left-invariant scalar product gives rise to a left-invariant \emph{sub-Riemannian} or  \emph{Carnot--Carath\'eodory} distance function $\dist$ on $G$.
To define this, we first say that a smooth curve $\gamma$ is  \emph{horizontal} if $\dot\gamma(t)\in \horbun_{\gamma(t)}$ for every $t$.
Then we define the distance $\dist(p,q)$ between points $p$ and $q$ by
\[
\dist(p, q) := \inf\int_0^1 \biglpar \lip  \dot\gamma(t), \dot\gamma(t) \rip_{\gamma(t)} \bigrpar^{1/2}  \,dt ,
\]
where the infimum is taken over all horizontal curves $\gamma: [0, 1] \to G$ such that $\gamma(0) = p$ and $\gamma(1) = q$.
The stratified group $G$, equipped with the distance $\dist$, is known as a \emph{Carnot group}; we usually omit mention of $\dist$.

We now equip the whole of $\Lie{g}$ with an inner product.
Let $\lset E_1, \dots, E_{d_1}\rset$ be an orthonormal basis for $\Lie{g}_{-1}$.
For each positive integer $j$, we equip  $\Lie{g}_{-1}\otimes^j$ with the standard inner product, for which an orthonormal basis is formed of the tensors $E_{i_1} \otimes \dots \otimes E_{i_j}$, where each index varies over $\lset 1, \dots, {d_1}\rset$.
Define the linear projection $P_j:\Lie{g}_{-1}\otimes^j\to \Lie{g}_{-j}$ by the requirement that
\begin{equation}\label{def:projection-P_j}
P_j(X_1\otimes X_2\otimes \cdots \otimes X_j)=[\dots [X_1,X_2] \dots, X_j]
\end{equation}
for all $X_i\in \Lie{g}_{-1}$.
Then $\Lie{g}_{-j}$ is isomorphic to $\ker P_j^\perp$, and we can therefore provide each $\Lie{g}_{-j}$ with the restriction to $\ker P_j^\perp$ of the inner product.
We extend the inner product to $\Lie{g}$ by keeping the different spaces $\Lie{g}_{-j}$ orthogonal; the new inner product is still denoted by $\lip\cdot,\cdot\rip$.
The next lemma follows from the definitions.

\begin{lemma}\label{lem:tensor-product}
The projection $P_j:\Lie{g}_{-1}\otimes^j\to \Lie{g}_{-j}$ and the inner products just defined have the following properties:
\begin{enumerate}[(i)]
\item
 for all $\tau \in \Lie{g}_{-1}\otimes^j$, we have $\lnorm P_j(\tau) \rnorm \leq  \lnorm \tau \rnorm$.

\item
for all $W \in \Lie{g}_{-j}$, there exists $\tau \in \Lie{g}_{-1}\otimes^j$ such that $P_j(\tau) = W$ and $\lnorm \tau \rnorm \leq \lnorm W \rnorm$.
\end{enumerate}
\end{lemma}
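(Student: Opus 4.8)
The plan is simply to unwind the definition of the inner product that was just placed on $\Lie{g}_{-j}$. By construction, $P_j$ restricts to a linear isomorphism from $\ker P_j^\perp$ onto $\Lie{g}_{-j}$, and the inner product on $\Lie{g}_{-j}$ is the one obtained by transporting the inner product on $\ker P_j^\perp$ along this isomorphism. So the first thing to record is that $\lnorm P_j(\sigma) \rnorm = \lnorm \sigma \rnorm$ for every $\sigma \in \ker P_j^\perp$; this is the only substantive ingredient, and both parts of the lemma fall out of it together with elementary properties of orthogonal projections.

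For part~(i), I would take $\tau \in \Lie{g}_{-1}\otimes^j$ and split it as $\tau = \tau_0 + \tau_1$ with $\tau_0 \in \ker P_j$ and $\tau_1 \in \ker P_j^\perp$. Since $P_j(\tau) = P_j(\tau_1)$, the isometry statement gives $\lnorm P_j(\tau)\rnorm = \lnorm \tau_1 \rnorm$, while orthogonality of $\tau_0$ and $\tau_1$ gives $\lnorm \tau_1 \rnorm \le \lnorm \tau \rnorm$; combining these yields (i). For part~(ii), given $W \in \Lie{g}_{-j}$ I would let $\tau$ be the unique preimage of $W$ lying in $\ker P_j^\perp$, which exists because the restriction of $P_j$ to $\ker P_j^\perp$ is onto $\Lie{g}_{-j}$. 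Then $P_j(\tau) = W$ and, again by the isometry statement, $\lnorm \tau \rnorm = \lnorm W \rnorm$, which in particular gives the required inequality, so (ii) holds (in fact with equality).

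There is no genuine obstacle here: the content of the lemma is entirely encoded in the way the inner product on $\Lie{g}_{-j}$ was defined, so the proof reduces to checking that $P_j$ acts isometrically on $\ker P_j^\perp$ and then invoking the Pythagorean inequality for the orthogonal decomposition along $\ker P_j$. The only point demanding any care is not to conflate the transported inner product on $\Lie{g}_{-j}$ with some other natural choice of inner product on that space; once that is fixed, the two estimates are forced.
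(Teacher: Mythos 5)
Your argument is correct and is exactly the intended one: the paper gives no explicit proof, saying only that the lemma ``follows from the definitions,'' and your unwinding---that $P_j$ is isometric on $\ker P_j^{\perp}$ by the very definition of the inner product on $\Lie{g}_{-j}$, combined with the orthogonal decomposition along $\ker P_j$---is precisely what that remark is gesturing at. Nothing is missing.
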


\begin{definition}
We define a Riemannian metric on $G$ by the formula
\begin{align}
\lip V, W \rip_p = \lip (L_{p^{-1}})_*(V),(L_{p^{-1}})_*(W) \rip  \label{scalarprod}
\end{align}
for all $V, W \in T_p$.
\end{definition}

\subsection{Morphisms and affine maps}
We now discuss homomorphisms and automorphisms  of Carnot groups.
Since $G$ is simply connected and the exponential map $\exp: \Lie{g} \to G$ is a diffeomorphism, $T$ is a homomorphism or automorphism of $G$ if and only if its differential $dT$ is a homomorphism or automorphism of $\Lie{g}$, and $T = {\mathrm{exp}} \circ  {dT} \circ {\mathrm{exp}}^{-1}$.

For a homomorphism of $\Lie{g}$, preserving all the subspaces $\Lie{g}_{-j}$ of the stratification is equivalent to commuting with dilations.
We use the adjective ``strata-preserving'' to describe such homomorphisms, both at the algebra level and at the group level.
We denote by $\Aut(\Lie{g})$ the Lie group of strata-preserving automorphisms of $\Lie{g}$, and by $\Aut(G)$ the corresponding Lie group of automorphisms of $G$.

The Lie algebra of the group $\Aut(\Lie{g})$ is the Lie algebra of strata-preserving derivations of $\Lie{g}$, which we denote by $\Der(\Lie{g})$.
The set $\lset \delta_t : t \in  \R^+\rset$ of dilations is a one parameter subgroup of $\Aut(\Lie{g})$, whose Lie algebra is generated by  the derivation $H \in \Der (\Lie{g})$ defined by $H(X):=jX$, for every $X\in \Lie{g}_{-j}$ and  $j=1,\dots,s$.
In particular, $\delta_{e^s}=\exp(s H) $ for every $s \in \R$.

\begin{lemma}\label{lem:stratified-norms}
Suppose that $T$ is a strata-preserving automorphism of $\Lie{g}$.
Then
\[
\biglnorm T \rest{\Lie{g}_{-j}} \bigrnorm \leq \biglnorm T \rest{\Lie{g}_{-1}} \bigrnorm ^j .
\]
\end{lemma}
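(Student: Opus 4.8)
The plan is to exploit the tensor-power description of the higher strata from Lemma~\ref{lem:tensor-product}, together with the observation that a strata-preserving automorphism is compatible with the projections $P_j$. Write $T_1 := T\rest{\Lie{g}_{-1}}$; since $T$ is strata-preserving it maps $\Lie{g}_{-1}$ to itself, so $T_1$ is well defined, and we let $T_1^{\otimes j}$ denote the induced linear map on $\Lie{g}_{-1}\otimes^j$.

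First I would record the key intertwining identity
\[
T \circ P_j = P_j \circ T_1^{\otimes j}
\]
as maps $\Lie{g}_{-1}\otimes^j \to \Lie{g}_{-j}$. This is immediate from the definition \eqref{def:projection-P_j} of $P_j$ as iterated bracketing: since $T$ preserves brackets and $\Lie{g}_{-1}$,
\[
T\bigl(P_j(X_1\otimes\cdots\otimes X_j)\bigr) = [\dots[T_1 X_1, T_1 X_2]\dots, T_1 X_j] = P_j\bigl(T_1^{\otimes j}(X_1\otimes\cdots\otimes X_j)\bigr)
\]
for all $X_i \in \Lie{g}_{-1}$, and one extends by linearity over the spanning tensors. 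Next I would note the elementary fact that, for the standard inner product on $\Lie{g}_{-1}\otimes^j$, the operator norm of a tensor power of Hilbert-space operators is the corresponding power of the operator norm, so $\bigl\Vert T_1^{\otimes j} \bigr\Vert = \lnorm T_1 \rnorm^j$.

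Finally I would combine these with Lemma~\ref{lem:tensor-product}. Given a unit vector $W \in \Lie{g}_{-j}$, part (ii) of that lemma yields $\tau \in \Lie{g}_{-1}\otimes^j$ with $P_j(\tau) = W$ and $\lnorm \tau \rnorm \leq \lnorm W \rnorm = 1$. Then, using the intertwining identity, part (i) of the lemma, and the tensor-power norm computation in turn,
\[
\lnorm T W \rnorm = \bigl\Vert P_j\bigl(T_1^{\otimes j}\tau\bigr) \bigr\Vert \leq \bigl\Vert T_1^{\otimes j}\tau \bigr\Vert \leq \lnorm T_1 \rnorm^j \lnorm \tau \rnorm \leq \lnorm T_1 \rnorm^j .
\]
Taking the supremum over unit vectors $W \in \Lie{g}_{-j}$ gives $\biglnorm T\rest{\Lie{g}_{-j}} \bigrnorm \leq \biglnorm T\rest{\Lie{g}_{-1}} \bigrnorm^j$, as required. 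I expect no real obstacle here; the only point that needs a line of justification is the intertwining identity $T\circ P_j = P_j\circ T_1^{\otimes j}$, and everything else is the two estimates already packaged in Lemma~\ref{lem:tensor-product} together with the standard multiplicativity of operator norms under tensor products.
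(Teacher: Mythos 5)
Your argument is correct and is essentially the paper's own proof, just written out in full: the paper likewise passes to the induced map on $\Lie{g}_{-1}\otimes^j$, bounds its operator norm by $\bigl\Vert T\rest{\Lie{g}_{-1}}\bigr\Vert^j$, and then invokes both parts of Lemma~\ref{lem:tensor-product} exactly as you do. Your explicit intertwining identity $T\circ P_j = P_j\circ T_1^{\otimes j}$ is the step the paper leaves implicit, and it is justified correctly.
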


\begin{proof}
The standard extension of $T$ to a linear mapping on $\Lie{g}_{-1}\otimes^j$ has operator norm at most $\biglnorm T \rest{\Lie{g}_{-1}} \bigrnorm ^j$.
The result now follows from Lemma \ref{lem:tensor-product}.
\end{proof}

We now consider various special kinds of automorphisms of $G$.
For the moment, we say that an automorphism $T$ of $G$ is distance-preserving if
\[
\dist(Tx,Ty) = \dist(x,y)
\quad\forall x,y \in G .
\]
Since the distance on $G$ is left-invariant, and is defined in terms of horizontal curves, $T$ is distance-preserving if and only if its differential $dT$ on $\Lie{g}$ is isometric on $\Lie{g}_{-1}$, that is,
\[
\lnorm dT(X) \rnorm = \lnorm X \rnorm
\quad\forall X \in \Lie{g}_{-1} .
\]

\begin{lemma}\label{lem:isometry-higher-strata}
If $T$ is an automorphism of $\Lie{g}$ that is isometric  on $\Lie{g}_{-1}$, then $T$ is isometric on $\Lie{g}$, that is,
\begin{equation*}
 \lnorm T(X) \rnorm = \lnorm  X \rnorm
 \quad\forall X \in \Lie{g}.
\end{equation*}
\end{lemma}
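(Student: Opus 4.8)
The plan is to mirror the proof of Lemma~\ref{lem:stratified-norms}: transfer the problem from $\Lie{g}_{-j}$ to $\Lie{g}_{-1}\otimes^j$ via the projection $P_j$ and the tensor‑power extension of $T$, and then use Lemma~\ref{lem:tensor-product}. Throughout, $T$ respects the stratification (it commutes with the dilations $\delta_t$, cf.\ Lemma~\ref{lem:good-automorphisms}; in any case an automorphism carrying $\Lie{g}_{-1}$ into itself carries each $\Lie{g}_{-j}=[\dots[\Lie{g}_{-1},\Lie{g}_{-1}]\dots,\Lie{g}_{-1}]$ into itself). Since the inner product on $\Lie{g}$ was defined so that the strata $\Lie{g}_{-1},\dots,\Lie{g}_{-s}$ are mutually orthogonal and $T$ preserves each of them, it is enough to prove that $\lnorm T(W)\rnorm=\lnorm W\rnorm$ for $W$ lying in a single stratum $\Lie{g}_{-j}$; the general case then follows by Pythagoras applied to $X=\sum_j X_j$ with $X_j\in\Lie{g}_{-j}$.

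Fix $j$ and set $S:=T\rest{\Lie{g}_{-1}}$, which by hypothesis is an orthogonal transformation of $\Lie{g}_{-1}$. Let $S^{\otimes j}\colon\Lie{g}_{-1}\otimes^j\to\Lie{g}_{-1}\otimes^j$ be its $j$‑fold tensor power; since $S$ is orthogonal, $S^{\otimes j}$ is orthogonal for the standard inner product on $\Lie{g}_{-1}\otimes^j$. The key point is the intertwining identity
\[
T\circ P_j = P_j\circ S^{\otimes j},
\]
which is immediate from the fact that $T$ is a Lie algebra homomorphism: applying $T$ to $[\dots[X_1,X_2]\dots,X_j]$ and repeatedly using $T[\,\cdot\,,\,\cdot\,]=[T\,\cdot\,,T\,\cdot\,]$ gives $[\dots[SX_1,SX_2]\dots,SX_j]=P_j\bigl(S^{\otimes j}(X_1\otimes\cdots\otimes X_j)\bigr)$.

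Now take $W\in\Lie{g}_{-j}$. By Lemma~\ref{lem:tensor-product}(ii) there is $\tau\in\Lie{g}_{-1}\otimes^j$ with $P_j(\tau)=W$ and $\lnorm\tau\rnorm\leq\lnorm W\rnorm$; together with Lemma~\ref{lem:tensor-product}(i) this forces $\lnorm\tau\rnorm=\lnorm W\rnorm$. Using the intertwining identity, then Lemma~\ref{lem:tensor-product}(i) again, and finally the orthogonality of $S^{\otimes j}$,
\[
\lnorm T(W)\rnorm=\biglnorm P_j\bigl(S^{\otimes j}(\tau)\bigr)\bigrnorm\leq\biglnorm S^{\otimes j}(\tau)\bigrnorm=\lnorm\tau\rnorm=\lnorm W\rnorm .
\]
So $\lnorm T(W)\rnorm\leq\lnorm W\rnorm$ for every $W$ in every stratum. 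The same reasoning applies to $T^{-1}$, which is again a strata‑preserving automorphism and whose restriction to $\Lie{g}_{-1}$ is $S^{-1}$, still orthogonal; hence $\lnorm T^{-1}(W)\rnorm\leq\lnorm W\rnorm$, and therefore $\lnorm W\rnorm=\biglnorm T^{-1}(T(W))\bigrnorm\leq\lnorm T(W)\rnorm\leq\lnorm W\rnorm$, which gives equality. Reassembling the strata finishes the proof.

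I do not expect a genuine obstacle: once the intertwining identity is recorded, everything reduces to Lemma~\ref{lem:tensor-product}. The one subtlety worth flagging is that $P_j$ is only norm‑non‑increasing, so the tensor‑power computation yields the inequality $\lnorm T(W)\rnorm\le\lnorm W\rnorm$ rather than an equality; the point of the argument is to run it a second time for $T^{-1}$ to pin down equality. A minor bookkeeping matter is the reduction to a single stratum, which rests on $T$ preserving the stratification.
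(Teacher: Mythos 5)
Your proof is correct and follows essentially the same route as the paper, which simply cites Lemma~\ref{lem:stratified-norms} applied to $T$ and $T^{-1}$ to get $\bigl\Vert T\rest{\Lie{g}_{-j}}\bigr\Vert\le 1$ and $\bigl\Vert T^{-1}\rest{\Lie{g}_{-j}}\bigr\Vert\le 1$; you have merely inlined the tensor-power/$P_j$ argument behind that lemma. The extra care you take over the reduction to a single stratum and over $T$ being strata-preserving is sound but not a different method.
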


\begin{proof}
This follows from Lemma \ref{lem:stratified-norms} applied to $T$ and $T^{-1}$.
\end{proof}

We write $\IsoDer(\Lie{g})$ for the space of derivations of $\Lie{g}$ that are skew-symmetric on $\Lie{g}_{-1}$.
From Lemma \ref{lem:isometry-higher-strata}, every element of $\IsoDer(\Lie{g})$ is actually skew-symmetric on all $\Lie{g}$.

Now we consider conformal automorphisms of $\Lie{g}$.
An automorphism $T$ of $\Lie{g}$ is conformal with dilation factor $t$ if and only if
\[
\lnorm T(X) \rnorm = t \lnorm X \rnorm
\quad\forall X \in \Lie{g}_{-1} .
\]
The Lie algebra of the group $\ConfAut(\Lie{g})$ of conformal automorphisms of $\Lie{g}$ is the Lie algebra $\ConfDer(\Lie{g})$ of conformal derivations,  which is equal to $\R H + \IsoDer(\Lie{g})$.

If $T$ is an automorphism of $G$, then $dT$ is conformal, with dilation factor $t$, if and only if
\[
\dist(Tx,Ty) = t \dist(x,y)
\quad\forall x,y \in G ,
\]
or, equivalently, $\delta_t ^{-1}T$ is distance-preserving.

\begin{corollary}
If $T$ is a conformal automorphism of $\Lie{g}$, then
\begin{equation*}
 \lnorm T(X) \rnorm = \lnorm  \delta_{t} X \rnorm
 \quad\forall X \in \Lie{g},
\end{equation*}
where $T$ dilates by $t$ on $\Lie{g}_{-1}$.
\end{corollary}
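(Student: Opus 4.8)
The plan is to reduce to Lemma~\ref{lem:isometry-higher-strata} by factoring out the dilation. Since $T$ is a conformal automorphism of $\Lie{g}$ dilating $\Lie{g}_{-1}$ by the factor $t$, consider the automorphism $S := \delta_t^{-1} \circ T$. On $\Lie{g}_{-1}$ the dilation $\delta_t$ acts as $t\,\id$, so $S$ restricts to an isometry of $\Lie{g}_{-1}$. Applying Lemma~\ref{lem:isometry-higher-strata} to $S$, I conclude that $S$ is an isometry of all of $\Lie{g}$.

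Next I would exploit that $S$ is strata-preserving: it lies in $\Aut(\Lie{g})$, being a composition of strata-preserving automorphisms. Hence $S$ maps each $\Lie{g}_{-j}$ isometrically onto itself, and for $X \in \Lie{g}_{-j}$ one has $T(X) = \delta_t(S(X)) = t^j S(X)$, so that $\lnorm T(X) \rnorm = t^j \lnorm S(X) \rnorm = t^j \lnorm X \rnorm = \lnorm \delta_t X \rnorm$.

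Finally, for a general $X = \sum_j X_j$ with $X_j \in \Lie{g}_{-j}$, the fact that $T$ is strata-preserving puts the vectors $T(X_j)$ in distinct, hence mutually orthogonal, strata, and likewise the vectors $\delta_t X_j$; the Pythagorean identity then gives
\[
\lnorm T(X) \rnorm^2 = \sum_j \lnorm T(X_j) \rnorm^2 = \sum_j \lnorm \delta_t X_j \rnorm^2 = \lnorm \delta_t X \rnorm^2 ,
\]
which is the claim.

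I do not anticipate a genuine obstacle here; the one point requiring care is that $\delta_t$ is not scalar multiplication on $\Lie{g}$, so the identity $\lnorm T(X) \rnorm = \lnorm \delta_t X \rnorm$ does \emph{not} follow merely from $\delta_t^{-1}T$ being an isometry of $\Lie{g}$ — one genuinely needs that $S = \delta_t^{-1} \circ T$ respects the stratification and that the strata are mutually orthogonal, without which the statement would fail.
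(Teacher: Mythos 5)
Your proof is correct and follows the same route as the paper, which simply invokes Lemma~\ref{lem:isometry-higher-strata}: the intended argument is precisely to apply that lemma to $\delta_t^{-1}\circ T$ and then use that this composition is strata-preserving together with the orthogonality of the strata. You have merely spelled out the details that the paper leaves implicit, including the worthwhile caveat that $\delta_t$ is not a scalar on all of $\Lie{g}$.
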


\begin{proof}
This follows immediately from Lemma \ref{lem:isometry-higher-strata}.
\end{proof}

\begin{corollary}\label{cor:conformality-higher-strata}
If $T$ is a conformal automorphism of $\Lie{g}$, and $\lnorm T(X) \rnorm = \lnorm X \rnorm$ for one nonzero element $X$ of $\Lie{g}$, then
\begin{equation*}
 \lnorm T(X) \rnorm = \lnorm  X \rnorm
 \quad\forall X \in \Lie{g}.
\end{equation*}
\end{corollary}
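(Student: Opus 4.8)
The plan is to use the preceding Corollary to trade conformality for the identity $\lnorm T(Y)\rnorm = \lnorm \delta_t Y\rnorm$ for all $Y\in\Lie{g}$, where $t$ is the factor by which $T$ dilates $\Lie{g}_{-1}$, and then to show that the hypothesis forces $t=1$. Once $t=1$, the automorphism $T$ is isometric on $\Lie{g}_{-1}$, and Lemma \ref{lem:isometry-higher-strata} immediately yields the conclusion.

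Concretely, I would write the given nonzero element as $X = X_{-1}+\dots+X_{-s}$ with $X_{-j}\in\Lie{g}_{-j}$. Since the extended inner product keeps the strata $\Lie{g}_{-j}$ mutually orthogonal and $\delta_t$ acts on $\Lie{g}_{-j}$ by multiplication by $t^{j}$, the preceding Corollary gives $\lnorm T(X)\rnorm^2 = \lnorm \delta_t X\rnorm^2 = \sum_{j=1}^{s} t^{2j}\lnorm X_{-j}\rnorm^2$, while $\lnorm X\rnorm^2 = \sum_{j=1}^{s}\lnorm X_{-j}\rnorm^2$. The assumption $\lnorm T(X)\rnorm = \lnorm X\rnorm$ therefore says precisely that $\sum_{j=1}^{s}(t^{2j}-1)\,\lnorm X_{-j}\rnorm^2 = 0$.

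The only real point is then an elementary observation: if $t\in\R^+$ and $t\neq 1$, the numbers $t^{2j}-1$ for $j=1,\dots,s$ are all strictly positive (when $t>1$) or all strictly negative (when $t<1$), so they share a common sign. Since $X\neq 0$, at least one $\lnorm X_{-j}\rnorm^2$ is positive, and then the sum above is nonzero, a contradiction. Hence $t=1$, so $T$ is isometric on $\Lie{g}_{-1}$, and Lemma \ref{lem:isometry-higher-strata} gives $\lnorm T(X)\rnorm = \lnorm X\rnorm$ for every $X\in\Lie{g}$. There is no substantive obstacle here; one need only be careful that the splitting $\lnorm \delta_t X\rnorm^2 = \sum_j t^{2j}\lnorm X_{-j}\rnorm^2$ depends on the particular extension of the inner product constructed in Section 2 (distinct strata orthogonal), together with the strict monotonicity of $t\mapsto t^{2j}$ on $\R^+$.
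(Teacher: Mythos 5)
Your proof is correct and follows essentially the same route as the paper: the paper's proof is simply ``this follows immediately from the previous corollary,'' and your argument is precisely the fleshed-out version of that one-liner, using the orthogonality of the strata and the common sign of $t^{2j}-1$ to force $t=1$. (The final appeal to Lemma \ref{lem:isometry-higher-strata} is redundant, since once $t=1$ the preceding corollary already gives $\lnorm T(X)\rnorm=\lnorm \delta_1 X\rnorm=\lnorm X\rnorm$ for all $X$, but this is harmless.)
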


\begin{proof}
This follows immediately from the previous corollary.
\end{proof}

An affine map is a map composed of an automorphism and a translation.
To some extent, it is irrelevant whether the translation is on the left or the right, as left and right translations differ by a conjugation.
However, since left translations are distance-preserving, while right translations are not (unless $G$ is abelian), it is more convenient to deal with left translations.

\subsection{General maps}
Recall that $\Omega$ denotes a connected open subset of $G$.
We recall that a continuous map $f:\Omega \to G$ is \emph{Pansu differentiable} at $p \in \Omega$ if the limit
\[
 {D}f(p)(q)=\lim_{t\rightarrow 0^+} \delta_t^{-1}\circ L_{f(p)}^{-1}\circ f \circ L_{p}\circ \delta_t(q)
\]
exists, uniformly for $q$ in compact subsets of $G$;  if ${D}f(p)$ exists, then it is a strata-preserving homomorphism of $G$.
Then there is a Lie algebra homomorphism  $df(p):\Lie{g} \to \Lie{g}$ such that ${D}f(p) \circ \mathrm{exp} = \mathrm{exp} \circ {d}f(p)$.
We call  ${D}f(p)$ the Pansu derivative and $df(p)$ the Pansu differential of $f$ at $p$.
By construction, both ${D}f(p)$ and ${d}f(p)$ commute with dilations, and so in particular, ${d}f(p)$ is a strata-preserving Lie algebra homomorphism.

We say that a $C^1$ map $f : \Omega \to G$ is isometric or conformal if ${d}f(p)$ is isometric or conformal for each point $p \in \Omega$; the dilation factor in conformality may vary from point to point.

\begin{lemma}
 Suppose that $f: \Omega \to G$ is $C^1$.
 Then $f$ is distance-preserving if and only if $df(p)$ is an isometry at each point $p$ in $\Omega$.
\end{lemma}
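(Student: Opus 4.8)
The plan is to prove the two implications separately, passing between the infinitesimal picture (the Pansu differential) and the metric picture (the Carnot--Carath\'eodory length functional).

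For the implication ``$f$ distance-preserving $\Rightarrow$ $df(p)$ an isometry'', fix $p\in\Omega$ and examine the rescaled maps $f_t:=\delta_t^{-1}\circ L_{f(p)}^{-1}\circ f\circ L_p\circ\delta_t$ appearing in the definition of $Df(p)$. Since $\dist$ is left-invariant and $\delta_t$ multiplies horizontal speeds, hence $\dist$ itself, by the factor $t$, a short computation shows that each $f_t$ is distance-preserving on its domain; letting $t\to0^+$ and using that $f_t\to Df(p)$ uniformly on compacta, one concludes that $Df(p)$ is a distance-preserving strata-preserving homomorphism of $G$. Its differential $df(p)$ preserves $\Lie{g}_{-1}$ and, being the differential of a distance-preserving map, is isometric on $\Lie{g}_{-1}$; hence $df(p)$ is injective on the generating subspace $\Lie{g}_{-1}$ and so is an automorphism of $\Lie{g}$, and Lemma~\ref{lem:isometry-higher-strata} then upgrades isometry on $\Lie{g}_{-1}$ to isometry on all of $\Lie{g}$.

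For the converse, assume $df(p)$ is an isometry of $\Lie{g}$ for every $p\in\Omega$; in particular each $df(p)$ is a strata-preserving homomorphism restricting to an isometry of $\Lie{g}_{-1}$. Since $f$ is $C^1$, its ordinary differential satisfies $f_*\bigl((L_p)_*X\bigr)=(L_{f(p)})_*\bigl(df(p)X\bigr)$ for $X\in\Lie{g}_{-1}$, and the right-hand side is a horizontal vector of norm $\lnorm df(p)X\rnorm=\lnorm X\rnorm$. Hence $f$ carries every horizontal curve $\gamma$ in $\Omega$ to a horizontal curve of the same length, so $\dist(f(p),f(q))\le\dist(p,q)$ whenever $p$ and $q$ are joined by a horizontal curve in $\Omega$. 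Since each $df(p)$ is invertible, a routine argument propagating invertibility up the stratification via the contact condition shows that $f_{*,p}$ is invertible, so $f$ is a local diffeomorphism; applying the length bound just obtained to the local $C^1$ inverses of $f$, whose Pansu differentials are the inverse isometries, gives the reverse inequality, and hence $f$ preserves $\dist$.

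I expect the main obstacle to be not either infinitesimal step but the metric bookkeeping when $\Omega\ne G$: preservation of lengths of horizontal curves in $\Omega$ controls only those distances realised by curves staying inside $\Omega$, so passing to ``$f$ preserves $\dist$'' in the literal sense requires the observation that, for $p$ and $q$ close enough, the Carnot--Carath\'eodory geodesics joining $p$ to $q$ and joining $f(p)$ to $f(q)$ remain in $\Omega$ and in $f(\Omega)$ respectively---a compactness argument---so that the clean statement one actually proves is that $f$ is a local isometry.
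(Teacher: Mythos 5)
Your proof is correct and follows the same route as the paper's own (two-sentence) argument: differentiating the distance relation via the Pansu rescalings for the forward implication, and integrating the pointwise isometry of $df$ along horizontal curves for the converse. The caveat you raise about geodesics possibly leaving $\Omega$ is a genuine subtlety the paper elides, and your resolution (the statement really proved is that $f$ is a local isometry, which suffices for the paper's use of the lemma) is the right one.
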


\begin{proof}
It is clear that if $f$ is isometric, then $f$ preserves the lengths of admissible curves and hence distances.
Conversely, if $f$ preserves distances, then differentiation shows that the Pansu differential is an isometry.
\end{proof}

At this point, it is clear that we may call distance-preserving maps isometries without risk of confusion.
The following result was first proved by E.~Le~Donne and Ottazzi, in greater generality \cite{LeDonne-Ottazzi}.

\begin{theorem}\label{thm:isometries}
Suppose that $f: \Omega \to G$ is an isometry.
Then $f$ is the restriction to $\Omega$ of an isometric affine mapping, that is, the composition of an isometric automorphism and a left translation.
\end{theorem}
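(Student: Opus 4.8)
The plan is to prove the theorem by showing that an isometry $f$ is Pansu differentiable almost everywhere (by Pansu's theorem, or directly since isometries are Lipschitz), that the Pansu differential is locally constant, and that a map with constant Pansu differential is affine; one then reconstructs the global affine map and invokes real analyticity or a connectedness argument to conclude that $f$ agrees with it on all of $\Omega$.

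First I would note that an isometry $f:\Omega\to G$ is $1$-Lipschitz, hence (by the Pansu--Rademacher theorem in this sub-Riemannian setting) Pansu differentiable almost everywhere, and at every such point $p$ the Pansu differential $df(p)$ is an isometric strata-preserving automorphism of $\Lie{g}$; by Lemma~\ref{lem:isometry-higher-strata} it suffices to know this on $\Lie{g}_{-1}$. The key step is to upgrade this pointwise statement: I would show that $p\mapsto df(p)$ is (locally) constant. The cleanest route is to first establish that $f$ is actually smooth. One way: after composing with a left translation we may assume $e\in\Omega$ and $f(e)=e$; then compose on the left with $(df(e))^{-1}$, reducing to the case where $df(e)=\id$. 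The blow-up definition of the Pansu derivative, together with the fact that $f$ is distance-preserving (so the approximating maps $\delta_t^{-1}\circ L_{f(p)}^{-1}\circ f\circ L_p\circ\delta_t$ are uniformly distance-preserving), shows $Df(e)=\id$. One then argues that $f$ must coincide with the identity near $e$: the standard trick is to iterate, using that $f$ preserves the Carnot--Carath\'eodory distance exactly (not just up to lower order), so that on geodesics emanating from $e$ the map is forced to be the identity by a rigidity of geodesics argument, and horizontal geodesics through $e$ fill a neighbourhood.

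Alternatively, and perhaps more robustly, I would argue as follows: the group $\IsoDer(\Lie{g})$ exponentiates to the isometric automorphisms, and together with left translations generates a Lie group of global isometries acting on $G$. By the Myers--Steenrod-type theorem for sub-Riemannian manifolds (here the Le~Donne--Ottazzi result being re-proved, so I must be careful not to be circular), local isometries of $G$ fixing a point and with trivial differential are trivial; this is precisely the content one derives from smoothness plus the ODE uniqueness for the flow. So the honest plan is: (1) reduce to $f(e)=e$, $df(e)=\id$ by composing with an isometric affine map; (2) show $f$ is $C^1$, indeed smooth, by a regularity argument for the distance-preserving condition (the blow-ups converge to $\id$ with the right uniformity, forcing $Df(p)$ to vary continuously, hence $f\in C^1$, hence bootstrap); (3) show $Df(p)=\id$ for all $p$ in a neighbourhood of $e$ by connectedness, using that the set where $Df=\id$ is open and closed once $f$ is $C^1$; (4) conclude $f=\id$ locally, then globally on $\Omega$ by analytic continuation along paths, since the candidate affine map and $f$ are both real analytic and agree on an open set.

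The main obstacle is step (2)--(3): promoting ``Pansu differentiable a.e.\ with isometric differential'' to ``$C^1$ with locally constant differential.'' This is exactly where the sub-Riemannian setting is delicate, because there is no direct Euclidean Rademacher argument and the horizontal distribution is non-integrable. I expect the cleanest argument uses that distance preservation is a closed condition stable under the blow-up, so the family of rescaled maps is precompact and every limit is again a global distance-preserving map; combined with the group structure (left translations act transitively), one gets that $Df$ takes values in the compact group of isometric automorphisms and, by a continuity/connectedness argument on $\Omega$, is constant. Once $f$ is known to be affine near one point, extending to all of $\Omega$ is routine: an affine map is real analytic, $f$ is real analytic where it is affine, and $\Omega$ is connected, so the identity principle propagates the equality. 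I would present steps (1), (3), (4) in full and cite the regularity input for step (2) or sketch the blow-up compactness argument, flagging it as the heart of the matter.
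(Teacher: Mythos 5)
First, a point of comparison: the paper does not actually prove Theorem \ref{thm:isometries}. It is stated without proof and attributed to Le~Donne and Ottazzi \cite{LeDonne-Ottazzi}, so there is no in-paper argument to measure your proposal against; the question is only whether your sketch would stand on its own. Its architecture is the standard one (reduce to $f(e)=e$ and $df(e)=\id$ by composing with an isometric affine map, show the Pansu differential is locally constant, conclude by connectedness and real-analytic continuation), and you are commendably honest about where the difficulty lies. But the two steps you flag as ``the heart of the matter'' are exactly the content of the theorem, and one of them is asserted in a form that is circular.

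Concretely, in step (3) you claim that once $f$ is $C^1$ the set $\lset p : Df(p)=\id \rset$ is ``open and closed.'' Closedness follows from continuity of $p\mapsto Df(p)$, but openness is precisely the local rigidity statement to be proved --- that a local isometry whose $1$-jet at a point is the identity is the identity nearby --- so as written the argument assumes what it must establish. Continuity of the differential together with compactness of the group of isometric strata-preserving automorphisms does not yield local constancy; in the Riemannian analogue the differential of an isometry, read in a left-invariant frame, is generally \emph{not} locally constant, so whatever makes Carnot groups special (the bracket-generating, non-integrable horizontal distribution constraining contact maps) has to enter the argument explicitly, and it never does. Likewise step (2) --- upgrading ``distance-preserving'' to ``$C^1$, indeed smooth'' --- is a genuine theorem (the sub-Riemannian Myers--Steenrod problem), and the sentence about blow-ups converging ``with the right uniformity'' is a statement of the desired conclusion rather than a proof of it. (If one reads the theorem with the paper's own convention that an ``isometry'' is already a $C^1$ map with isometric Pansu differential, step (2) can be waived, but then the entire burden falls on the unproved openness in step (3).) In short: the skeleton is correct and the gaps are candidly flagged, but the proposal reduces the theorem to the two statements that constitute the actual content of \cite{LeDonne-Ottazzi} rather than proving it.
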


\subsection{Tanaka prolongation}
In \cite{Tanaka2}, Tanaka introduced the prolongation of a stratified Lie algebra relative to a subalgebra $\Lie{g}_0$ of $\Der(\Lie{g})$.
The prolongation  $\Prol(\Lie{g},\Lie{g}_0)$ has the following properties:
\begin{enumerate}[(P1)]
\item     $\Prol(\Lie{g},\Lie{g}_0)=\sum_{i = -s}^t \Lie{g}_i$ is a graded Lie algebra and $\Lie{g} = \sum_{i = -s}^{-1}\Lie{g}_i$;
\item     if $U\in\Lie{g}_k$ where $k\geq 0$ and $[U,\Lie{g}_{-1}]=0$, then $U=0$;
\item     $\Prol(\Lie{g},\Lie{g}_0)$ is maximal among the Lie algebras satisfying (P1) and (P2).
\end{enumerate}
In (P1), the upper limit $t$ may be a natural number or $+\infty$.

We only consider prolongations satisfying the additional condition that $H \in \Lie{g}_0$, where $H$ is the element in $\Der(\Lie{g})$ such that $[H, X ]  =  jX$ for all $X \in \Lie{g}_{-j}$ and all $j \in \Z^+$.
The next lemma will clarify the graded structure of $\Prol(\Lie{g}, \Lie{g}_0)$.

\begin{lemma}
The operator $\ad(H)$ is diagonalisable, and $\ad(H) |_{\Lie{g}_{k}}  = -k I$ for all $k \in \Z$.
In particular, the element $H$  is in the centre of $\Lie{g}_0$.
Hence $[\Lie{g}_j,\Lie{g}_k] \subseteq \Lie{g}_{j+k}$ for all integers $j$ and $k$, and $[\Lie{g}_0,\Lie{g}_k]=\Lie{g}_k$ for all nonzero integers $k$.
Further, the centre $\Centre(\Prol(\Lie{g}, \Lie{g}_0))$ of $\Prol(\Lie{g}, \Lie{g}_0)$ is trivial, so $\ad$ is faithful on $\Prol(\Lie{g}, \Lie{g}_0)$.
Finally, if $\Prol(\Lie{g}, \Lie{g}_0)$ is finite-dimensional, then any ideal of $\Prol(\Lie{g}, \Lie{g}_0)$ is also graded.
\end{lemma}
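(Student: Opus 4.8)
The plan is to derive everything from the single identity $\ad(H)\rest{\Lie{g}_k} = -k\,\id$, since the remaining assertions then follow formally. For $-s \le k \le -1$ this is just the definition of $H$, namely $[H,X] = jX$ for $X \in \Lie{g}_{-j}$. For $k = 0$ it is the statement that $H$ lies in the centre of $\Lie{g}_0$, which holds because $\Lie{g}_0 \subseteq \Der(\Lie{g})$ consists of strata-preserving derivations, each of which commutes with the grading derivation $H$. For $k \ge 1$ I would proceed by induction on $k$. Given $u \in \Lie{g}_k$, the grading in (P1) gives $[H,u] \in [\Lie{g}_0,\Lie{g}_k] \subseteq \Lie{g}_k$, and for each $X \in \Lie{g}_{-1}$ we have $[u,X] \in \Lie{g}_{k-1}$ and $[H,X] = X$, so the Jacobi identity yields
\[
[H,[u,X]] = [[H,u],X] + [u,X];
\]
by the inductive hypothesis applied to $[u,X] \in \Lie{g}_{k-1}$, the left-hand side equals $-(k-1)[u,X]$, whence $[\,[H,u] + ku\,,\,X\,] = 0$. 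As this holds for all $X \in \Lie{g}_{-1}$ and $[H,u] + ku \in \Lie{g}_k$ with $k \ge 0$, property (P2) forces $[H,u] + ku = 0$, i.e.\ $\ad(H)\,u = -ku$. Since $\Prol(\Lie{g},\Lie{g}_0) = \sum_i \Lie{g}_i$ is a direct sum, every element is a finite sum of $\ad(H)$-eigenvectors, so $\ad(H)$ is diagonalisable and $\Lie{g}_k$ is precisely its $(-k)$-eigenspace.

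From here the remaining statements are routine. The inclusion $[\Lie{g}_j,\Lie{g}_k] \subseteq \Lie{g}_{j+k}$ is part of (P1); equivalently, for $U \in \Lie{g}_j$ and $V \in \Lie{g}_k$ the Jacobi identity gives $\ad(H)\,[U,V] = -(j+k)[U,V]$, and the $(-(j+k))$-eigenspace of $\ad(H)$ is $\Lie{g}_{j+k}$, which is $\{0\}$ when $j+k$ falls outside the range of the grading. For $k \ne 0$ the operator $\ad(H)\rest{\Lie{g}_k} = -k\,\id$ is invertible, whence $\Lie{g}_k = \ad(H)(\Lie{g}_k) \subseteq [\Lie{g}_0,\Lie{g}_k] \subseteq \Lie{g}_k$, giving $[\Lie{g}_0,\Lie{g}_k] = \Lie{g}_k$. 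If $Z$ lies in the centre $\Centre(\Prol(\Lie{g},\Lie{g}_0))$, write $Z = \sum_k Z_k$ with $Z_k \in \Lie{g}_k$; then $0 = [H,Z] = \sum_k (-k)Z_k$, and directness of the sum forces $Z_k = 0$ for $k \ne 0$, so $Z = Z_0 \in \Lie{g}_0$. Since $[Z_0,\Lie{g}_{-1}] = 0$, property (P2) gives $Z_0 = 0$; hence the centre is trivial and $\ker(\ad) = \Centre(\Prol(\Lie{g},\Lie{g}_0)) = \{0\}$, so $\ad$ is faithful.

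Finally, suppose $\Prol(\Lie{g},\Lie{g}_0)$ is finite-dimensional and $\Lie{a}$ is an ideal. Then $\Lie{a}$ is invariant under $\ad(H)$, which is diagonalisable with finitely many eigenvalues; the projections onto its eigenspaces $\Lie{g}_k$ are polynomials in $\ad(H)$ (by Lagrange interpolation), so they preserve $\Lie{a}$, and therefore $\Lie{a} = \bigoplus_k (\Lie{a} \cap \Lie{g}_k)$ is graded. The one step demanding genuine care is the inductive computation of $\ad(H)$ on the positive strata $\Lie{g}_k$ with $k \ge 1$: the Jacobi identity only determines $[H,u]$ modulo elements of $\Lie{g}_k$ that annihilate $\Lie{g}_{-1}$, and it is precisely the non-degeneracy built into (P2) that removes this ambiguity. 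Everything else is bookkeeping with the direct-sum decomposition and the eigenspaces of $\ad(H)$.
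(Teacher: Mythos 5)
Your proof is correct, and the core of it---establishing $\ad(H)\rest{\Lie{g}_k}=-kI$ for $k\geq 0$ by induction via the Jacobi identity and then killing the ambiguity with (P2)---is exactly the paper's argument (the paper runs the induction uniformly from $k=0$ rather than treating $\Lie{g}_0$ separately via commutation with strata-preserving derivations, but these are the same computation). Two of your auxiliary steps take mildly different routes. For the triviality of the centre, you reduce to $Z_0\in\Lie{g}_0$ with $[Z_0,\Lie{g}_{-1}]=0$ and invoke (P2), while the paper says that $Z_0$ is a derivation of $\Lie{g}$ that annihilates $\Lie{g}$ and hence vanishes; these are interchangeable. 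For the gradedness of ideals, the paper lets the one-parameter group $\Ad(\exp(tH))$ act, rescales by the top exponential factor, and extracts the leading component as a limit, using that an ideal in a finite-dimensional algebra is closed; you instead observe that the spectral projections of the diagonalisable operator $\ad(H)$ are polynomials in $\ad(H)$ (Lagrange interpolation) and hence preserve any $\ad(H)$-invariant subspace. Your version is purely algebraic and avoids the topological closedness argument, at the cost of using finiteness of the spectrum in the same place the paper uses finite-dimensionality; both are standard and equally valid here.
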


\begin{proof}
We have already seen that
\[
\ad(H) |_{\Lie{g}_{k}}  = -k I
\]
for all negative integers $k$, and we prove it by induction for all integers $k$, starting at $-1$.
Suppose that $k \geq 0$ and that $\ad(H) |_{\Lie{g}_{j}}  = -j I$ if $j \leq  k-1$.
We must show that $\ad(H) |_{\Lie{g}_k} = -k I$.

Take $Y \in \Lie{g}_k$ and $X \in \Lie{g}_{-1}$.
Since $[Y,X] \in \Lie{g}_{k-1}$,
\[
[[H,Y] , X]=[H,[Y, X]]+[[H,X],Y] = -(k-1) [Y,X]  + [X,Y]= -k [Y,X] .
\]
Since $\Lie{g}_k$ is completely determined by its action on $\Lie{g}_{-1}$, it follows that $\ad(H) = -k I$ on $\Lie{g}_k$.

It follows immediately that $\Centre(\Prol(\Lie{g}, \Lie{g}_0)) \subseteq \Lie{g}_0$ and $[\Lie{g}_0,\Lie{g}_k]=\Lie{g}_k$ for all nonzero integers $k$.
Further, the Jacobi identity implies that $[\Lie{g}_j,\Lie{g}_k] \subseteq \Lie{g}_{j+k}$ for all integers $j$ and $k$.
If $Z \in \Centre(\Prol(\Lie{g}, \Lie{g}_0))$, then $Z \in \Lie{g}_0$, and hence $Z$ is a derivation of $\Lie{g}$ that is null, and so $Z = 0$.

 Suppose that $\Lie{t}$ is an ideal in the finite-dimensional vector space $\Prol(\Lie{g}, \Lie{g}_0)$.
Then $\Lie{t}$ is closed in $\Prol(\Lie{g}, \Lie{g}_0)$.
If $X \in \Lie{t}$, then we may write $X = \sum_j X_j$, where $X_j \in \Lie{g}_j$.
Write $\bar\jmath$ for $\max\lset j : X_j \neq 0 \rset$.
Since $\Ad(\exp(tH)) X \in \Lie{t}$, we see that $e^{-t\bar\jmath} \sum_j e^{tj} X_j \in \Lie{t}$ and hence, taking limits, $X_{\bar\jmath} \in \Lie{t}$.
Induction shows that all components of $X$ are in $\Lie{t}$.
\end{proof}

\subsection{A graded version of the Levi--Malcev theorem}
We now suppose that the Lie algebra $\Prol(\Lie{g}, \Lie{g}_0)$ is finite-dimensional.
We say that a subalgebra $\Lie{a}$ of $\Prol(\Lie{g}, \Lie{g}_0)$ is $H$-graded if $\Lie{a} = \sum_{j} (\Lie{a} \cap \Lie{g}_{j})$.
We say that $X \in  \Prol(\Lie{g}, \Lie{g}_0)$ is semisimple or nilpotent if $\ad(X)$ is semisimple or nilpotent.
Recall that if $D \in \Der(\Lie{g})$, then we may write $D = D^s + D^n$, where $D^s$ is semisimple while $D^n$ is nilpotent, and both $D^s$ and $D^n$ lie in $\Der(\Lie{g})$ (see \cite[Section 1, Proposition 4]{Bourbaki-GAL7}.
We say that a Lie subalgebra $\Lie{g}_0$ of $\Der(\Lie{g})$ is  \emph{splittable} if the semisimple and the nilpotent parts of each $D \in \Lie{g}_0$ also lie in $\Lie{g}_0$.
Evidently $\Der(\Lie{g})$ is splittable, as is $\ConfDer(\Lie{g})$, because this contains semisimple elements only.
The next result is due to C.~Medori and M.~Nacinovich \cite{Medori-Naci}.

\begin{theorem}\label{thm:graded-Levi-splitting}
Suppose that $\Lie{g}_0$ is a splittable Lie subalgebra of $\Der(\Lie{g})$, and that $ \Prol(\Lie{g}, \Lie{g}_0)$ is finite-dimensional.
Then the solvable radical $\Lie{r}$ of $\Prol(\Lie{g}, \Lie{g}_0)$ is $H$-graded, and it is possible to choose an $H$-graded semisimple subalgebra $\Lie{s}$ such that $\Prol(\Lie{g}, \Lie{g}_0) = \Lie{s} + \Lie{r}$.
\end{theorem}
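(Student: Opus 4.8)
The plan is to derive the statement from the lemma above together with an equivariant form of the Levi--Malcev theorem, applied to the one-parameter group of automorphisms generated by $\ad(H)$. Set $\Lie{p}=\Prol(\Lie{g},\Lie{g}_0)=\sum_j\Lie{g}_j$. Since $\Lie{p}$ is finite-dimensional, the lemma shows that every ideal of $\Lie{p}$ is $H$-graded; in particular the solvable radical $\Lie{r}$ is $H$-graded, which is the first assertion. Because $\ad(H)$ is a derivation of $\Lie{p}$, for each $\lambda\in\R^+$ the operator $\delta_\lambda:=\exp\bigl((\log\lambda)\,\ad(H)\bigr)$ is an automorphism of $\Lie{p}$ that preserves $\Lie{r}$ and acts on $\Lie{g}_j$ as multiplication by $\lambda^{-j}$. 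Moreover, since $\ad(H)$ is diagonalisable with the $\Lie{g}_j$ as its eigenspaces, associated with the distinct eigenvalues $-j$, a subspace $\Lie{a}\subseteq\Lie{p}$ is $H$-graded if and only if it is invariant under $\ad(H)$, equivalently under every $\delta_\lambda$. Hence it suffices to produce a Levi subalgebra of $\Lie{p}$ invariant under the torus $T:=\lset\delta_\lambda:\lambda\in\R^+\rset$; any such $\Lie{s}$ is then $H$-graded, and, being semisimple, satisfies $\Lie{s}\cap\Lie{r}=\{0\}$ and $\Lie{p}=\Lie{s}\oplus\Lie{r}$.

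To obtain a $T$-invariant Levi subalgebra I would appeal to the equivariant Levi--Malcev theorem of Mostow: if a reductive algebraic group acts by automorphisms on a finite-dimensional Lie algebra over a field of characteristic zero and preserves the solvable radical, then there is an invariant Levi subalgebra. The group $T$ qualifies, since its infinitesimal generator $\ad(H)$ is semisimple with integer eigenvalues, so $T$ is a one-dimensional split torus of automorphisms of $\Lie{p}$ fixing $\Lie{r}$. A self-contained alternative is to rerun the classical proof of Levi--Malcev with $T$-equivariant cochains: all the modules that occur are subquotients of tensor powers of $\Lie{p}$, on which $T$ acts with integer weights, so they split into weight spaces and the functor of $T$-invariants is exact; consequently the Whitehead vanishing that yields both the existence and the conjugacy of Levi subalgebras is inherited by the complex of $T$-invariant cochains, and one obtains a $T$-equivariant splitting $\Lie{p}/\Lie{r}\to\Lie{p}$ whose image is the subalgebra sought. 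A third route: start from an arbitrary Levi subalgebra $\Lie{s}_0$; then $\delta_\lambda(\Lie{s}_0)$ is again a Levi subalgebra, hence of the form $\exp(\ad(n_\lambda))\,\Lie{s}_0$ with $n_\lambda$ in the nilradical of $\Lie{p}$ by Malcev's theorem, and a fixed-point argument for the action of the reductive group $T$ on the unipotent group of such inner conjugations straightens $\Lie{s}_0$ to a $T$-invariant Levi subalgebra.

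The main obstacle is precisely this equivariant step; everything else is formal. Concretely, one must either verify carefully that $T$ meets the hypotheses of Mostow's theorem --- which it does, exactly because $\ad(H)$ is semisimple with rational (indeed integer) eigenvalues, so that $T$ is an algebraic torus of automorphisms preserving $\Lie{r}$ --- or else check that the cohomological obstruction classes governing the existence and the uniqueness up to conjugacy of Levi subalgebras descend to $T$-invariants. The remaining points are routine: the gradedness of $\Lie{r}$ is immediate from the lemma, and the passage between $H$-gradedness and $T$-invariance is nothing more than the fact that an invariant subspace of a diagonalisable operator is the direct sum of its intersections with the eigenspaces.
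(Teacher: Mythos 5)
Your argument is sound, but note that the paper does not actually prove this theorem: it is quoted as a result of Medori and Nacinovich, with no proof supplied. So the relevant comparison is between your proof and a citation. Your route is a legitimate and essentially self-contained one. The first assertion is exactly as you say: $\Lie{r}$ is an ideal, and the preceding lemma shows every ideal of a finite-dimensional $\Prol(\Lie{g},\Lie{g}_0)$ is graded. For the second, the reduction of $H$-gradedness to invariance under the one-parameter group generated by $\ad(H)$ is correct (a subspace invariant under a diagonalisable operator is the sum of its intersections with the eigenspaces, and invariance under $\ad(H)$ and under $\exp(t\,\ad(H))$ coincide in finite dimensions), and the existence of an invariant Levi factor then follows from Mostow's theorem. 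Two small points deserve care. First, the hypothesis in Mostow's theorem is that the group of automorphisms acts \emph{fully reducibly} on the Lie algebra, not merely that it is abstractly reductive; this holds here precisely because $\ad(H)$ is diagonalisable over $\R$, so you should state it that way (the condition that the group preserve $\Lie{r}$ is automatic, since the radical is a characteristic ideal). Second, your proof never uses the splittability of $\Lie{g}_0$: everything is driven by finite-dimensionality and the standing assumption $H\in\Lie{g}_0$. That is not an error --- it shows that, in the setting of this paper, the hypothesis is stronger than needed for this particular statement --- but it signals that Medori and Nacinovich prove something more general (a graded Levi--Malcev theorem in which splittability substitutes for the presence of a grading element), and if you intend your argument as a replacement for the citation you should say explicitly which hypotheses you are actually invoking. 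The cochain and fixed-point alternatives you sketch are standard and would also work, but the Mostow route is the cleanest to make rigorous.
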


\section{Conformal vector fields}
Henceforth, we suppose that $\Lie{g}_0= \ConfDer(\Lie{g})$, and denote by $\Lie{p}$  the Lie algebra $\Prol(\Lie{g} , \Lie{g}_0)$ that defines the space of conformal vector fields on $G$.
We decompose $\Lie{p}$ as $\sum_k \Lie{g}_k$.
According to Ottazzi and Warhurst \cite{Ottazzi_Warhurst}, $\Lie{p}$ is finite-dimensional (we agreed that the dimension of a Carnot group is at least $3$), and $\Lie{p}$ and the space of conformal vector fields, that is, vector fields whose local flow consists of conformal mappings, are isomorphic Lie algebras.
In this case, there is a graded isomorphism between the prolongation algebra and a Lie algebra of vector fields on $G$.
We will use upper case letters from $U$ to $Z$ to indicate elements of $\Lie{p}$ while $\breve U$, $\breve V$, and so on will indicate the corresponding vector fields: for a smooth function $u$ on $G$,
\[
(\breve U)u(p) = \frac{d}{dt} u(\flow{U} (p))\rest{t=0} \,,
\]
where $\flow{U}$ is the (local) flow associated to $U$.
More precisely, given a relatively compact open subset $\Omega$ of $G$, there is a subinterval $I$ of $\R$, which contains $0$, such that $\flow{U}(p)$ is defined in $G$ for all $p$ in $\Omega$ and all $t$ in $I$, and $\dotflow{U}(p) = (\breve U)_{q}$, where the dot denotes differentiation with respect to $t$ and $q = \flow{U}(p)$.
In particular, if $U \in \Lie{g}$, then
\[
(\breve U)u(p) = \frac{d}{dt} u(\exp(-tU)p)\rest{t=0}
\]
for all smooth functions $u$ on $G$.

\begin{lemma}\label{lem:conf-Lie-algebra-structure}
In the conformal prolongation algebra $\Lie{p}$, suppose that $X \in \Centre(\Lie{g})\setminus\{0\}$ and $U \in \sum_{k \geq 1} \Lie{g}_{k}\setminus\{0\}$.
Then $[U,X] \neq 0$.
\end{lemma}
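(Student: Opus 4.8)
The plan is to argue by contradiction: suppose $X \in \Centre(\Lie{g})\setminus\{0\}$ and $U \in \sum_{k\geq 1}\Lie{g}_k\setminus\{0\}$ satisfy $[U,X] = 0$. Since $\Lie{g}$ is generated by $\Lie{g}_{-1}$, the element $X$ lies in the top layer $\Lie{g}_{-s}$ of the stratification; more precisely $\Centre(\Lie{g})\cap\Lie{g}_{-1}=\{0\}$ because $[\Lie{g}_{-1},\Lie{g}_{-1}]$ generates everything, so $X$ has a nonzero component in some $\Lie{g}_{-j}$ with $j\geq 2$, and it is harmless (by the grading of $\Lie{p}$) to take $X$ homogeneous, say $X\in\Centre(\Lie{g})\cap\Lie{g}_{-j}$ with $j\geq 2$, and $U\in\Lie{g}_k$ homogeneous with $k\geq 1$. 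The commutator $[U,X]$ then lies in $\Lie{g}_{k-j}$.

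First I would build a nonzero element of $\Lie{g}_{-1}$ out of $X$. Since $X\neq 0$ and $\Lie{g}_{-s-1}=\{0\}$, repeated bracketing of $X$ with $\Lie{g}_{-1}$ cannot annihilate $X$ at the first step: there is $Y_1\in\Lie{g}_{-1}$ with $[X,Y_1]\neq 0$ — wait, that is not automatic since $X$ need not be in $\Lie{g}_{-2}$. Instead I would use that $\ad$ is faithful on $\Lie{p}$ (proved in the grading lemma) together with property (P2) of the prolongation: if $U\in\Lie{g}_k$, $k\geq 0$, and $[U,\Lie{g}_{-1}]=0$, then $U=0$. The key computation is to pair $U$ against $X$ and elements of $\Lie{g}_{-1}$ via the Jacobi identity. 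For $Z\in\Lie{g}_{-1}$,
\[
[[U,Z],X] = [U,[Z,X]] - [Z,[U,X]] = [U,[Z,X]],
\]
using $[U,X]=0$; and $[Z,X]=0$ because $X$ is central in $\Lie{g}$. Hence $[[U,Z],X]=0$ for every $Z\in\Lie{g}_{-1}$. Iterating, $[U',X]=0$ for every $U'$ obtained from $U$ by bracketing with elements of $\Lie{g}_{-1}$. Since $U\neq 0$ and $\ad$ is faithful, bracketing $U$ down through $\Lie{g}_{k-1},\Lie{g}_{k-2},\dots$ eventually produces a nonzero element $V\in\Lie{g}_0=\ConfDer(\Lie{g})$ with $[V,X]=0$, i.e.\ $V$ is a conformal derivation killing $X$; descending one more step, either we reach a nonzero element of $\Lie{g}_{-1}$ commuting with the central $X$ (always true, no contradiction yet) — so the real leverage must come from the conformal structure of $\Lie{g}_0$.

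The crux is therefore the following: a nonzero $V\in\ConfDer(\Lie{g})=\R H+\IsoDer(\Lie{g})$ with $VX=0$ cannot exist when $X\neq 0$ is homogeneous of degree $-j$ with $j\geq 2$. Indeed, writing $V=cH+W$ with $W\in\IsoDer(\Lie{g})$ skew-symmetric on all of $\Lie{g}$ (by Lemma \ref{lem:isometry-higher-strata} and the remark after it), we get $0=VX=cHX+WX=-cjX+WX$, so $WX=cjX$; but $W$ is skew-symmetric, hence has purely imaginary eigenvalues, forcing $cj=0$ and thus $c=0$, so $V=W\in\IsoDer(\Lie{g})$ with $WX=0$. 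This is not yet a contradiction, so one must push the descent one level higher: from $U\in\Lie{g}_k$ with $k\geq 1$ and $[U,X]=0$, the argument above shows $[U',X]=0$ for \emph{all} $U'$ in the subspace of $\Lie{g}_0$ generated by bracketing $U$ with $\Lie{g}_{-1}$ $k$ times — and since $[\Lie{g}_0,\Lie{g}_k]=\Lie{g}_k$ (grading lemma) together with (P2), that subspace is nonzero and the relation $[\cdot,X]=0$ propagates back up: in particular $[\Lie{g}_0\cdot\text{(something)},X]=0$. The main obstacle, which I expect to be the heart of the matter, is organising this descent/ascent so as to conclude that $\ad(X):\Lie{g}_k\to\Lie{g}_{k-j}$ being zero forces, via faithfulness of $\ad$ and property (P2), that $U$ itself vanishes — concretely, showing that $\{U'\in\Lie{p} : [U',X]=0\}$ is a graded ideal-like subspace whose intersection with $\sum_{k\geq 1}\Lie{g}_k$ must be $\{0\}$ because any nonzero top-degree component would, after bracketing down to $\Lie{g}_0$, produce a conformal derivation annihilating the central element $X$, contradicting that $H$ acts on $\Lie{g}_{-j}$ by the nonzero scalar $-j$ and is central in $\Lie{g}_0$. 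I would present this final contradiction by isolating the nonzero component of lowest degree of such a $U'$ and bracketing it down to $\Lie{g}_0$, where the eigenvalue computation $WX=cjX\Rightarrow c=0$ combined with $[\Lie{g}_0,X]\neq 0$ (since $HX=-jX\neq 0$) closes the argument.
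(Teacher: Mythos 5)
There is a genuine gap, and it is exactly where you flag it yourself. Your algebraic descent is fine as far as it goes: reducing to homogeneous $X$ and $U$, the Jacobi identity computation $[[U,Z],X]=[U,[Z,X]]-[Z,[U,X]]=0$ correctly shows that the centraliser of $X$ in $\Lie{p}$ is stable under bracketing with $\Lie{g}_{-1}$, and (P2) lets you descend from a nonzero $U\in\Lie{g}_k$ to a nonzero $V\in\Lie{g}_0$ with $[V,X]=0$; the eigenvalue argument then forces the $H$-component of $V$ to vanish, so $V\in\IsoDer(\Lie{g})$. But at that point nothing is contradicted: a nonzero skew-symmetric derivation can perfectly well annihilate a nonzero central element (think of a rotation fixing an axis), and one more step of the descent lands in $\Lie{g}$, where everything commutes with the central $X$ anyway. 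Your closing appeal to ``$[\Lie{g}_0,X]\neq 0$ since $HX=-jX\neq0$'' is a non sequitur: the argument only produces \emph{particular} elements of $\Lie{g}_0$ killing $X$, all of which have zero $H$-component, so the fact that $H$ itself does not kill $X$ is irrelevant. The claimed ``ideal-like subspace whose intersection with $\sum_{k\geq1}\Lie{g}_k$ is zero'' is precisely the statement of the lemma, and no mechanism in your sketch establishes it. (A side remark: your assertion that $\Centre(\Lie{g})\cap\Lie{g}_{-1}=\{0\}$ is also false in general --- e.g.\ the direct sum of a Heisenberg algebra with a one-dimensional abelian factor placed in the first layer --- though this does not affect the rest of your argument.)

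The paper's proof is not algebraic; it imports analytic rigidity. From $[U,X]=0$ one gets that the local flow $\flow{U}$ commutes with the (central, hence two-sided) translation flow of $X$, whence $d\flow{U}(p)X=X$ and $\lnorm d\flow{U}(p)X\rnorm=\lnorm X\rnorm$ at every point. Corollary \ref{cor:conformality-higher-strata} (a conformal automorphism of $\Lie{g}$ preserving the norm of one nonzero vector is an isometry) upgrades this to: $\flow{U}$ is an isometry on its domain. Theorem \ref{thm:isometries} (local isometries of Carnot groups are affine) then forces $\breve U$ to be generated by $\Lie{p}\cap\sum_{k\leq0}\Lie{g}_k$, contradicting $U\in\sum_{k\geq1}\Lie{g}_k\setminus\{0\}$. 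These two inputs --- propagation of isometry from a single vector via conformality, and the Le Donne--Ottazzi rigidity of isometries --- are exactly what your purely Lie-algebraic descent cannot supply, and I do not see how to complete your approach without reintroducing an equivalent of them.
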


\begin{proof}
Suppose, with a view to a contradiction, that $[U,X]=0$.

The flow generated by $\breve X$ is left translation, which is defined in all $G$ and for all time.
We denote by $\flow{U}$ the local flow corresponding to $\breve U$; we may assume that $\flow{U}(\exp(sX)p)$ is defined for all $p$ in an open set $\Omega$ and all $s$ and $t$ in a time interval $I$ that contains $0$.
Because $X$ and $U$ commute, so do the flows, and thus
\[
\flow{U} (\exp(sX)p)= \exp(sX) \flow{U}(p)
\]
for all $p \in \Omega$ and all $s$ and $t$ in $I$.
Since $X$ is in the centre of $\Lie{g}$, $\exp(sX)$ is in the centre of $G$, and so
\[
\flow{U} ( p \exp(sX))=  \flow{U}(p) \exp(sX)
\]
Taking the derivative with respect to $s$ at $0$, we deduce that
\[
d(\flow{U})  X = X,
\]
and in particular,
\begin{equation}\label{isometry}
\| d\flow{U}(p)  X\| = \|  X \| .
\end{equation}
Since $\flow{U}$ is conformal, Corollary \ref{cor:conformality-higher-strata} shows that
\[
\| d\flow{U}(p) Z \| = \|  Z \|,
\]
for every  $Z\in \Lie{g}$ and $p \in \Omega$.
This implies that $\flow{U}$ is isometric in $\Omega$, whence by Theorem \ref{thm:isometries}, the vector field $\breve U$ must be generated  by vectors in $\Lie{p} \cap \sum_{k\leq0} \Lie{g}_k$, which is the desired contradiction.
\end{proof}

\begin{theorem}\label{theorem-algebra}
Suppose that $G$ is a Carnot group.
Then either $\Lie{p} = \Lie{g} + \Lie{g}_0$ or $\Lie{p}$ is a noncompact simple Lie algebra of real rank $1$.
\end{theorem}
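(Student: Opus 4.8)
The plan is to analyze the structure of $\Lie{p} = \Prol(\Lie{g}, \ConfDer(\Lie{g}))$ using the graded Levi--Malcev decomposition of Theorem \ref{thm:graded-Levi-splitting}, writing $\Lie{p} = \Lie{s} + \Lie{r}$ with $\Lie{s}$ an $H$-graded semisimple subalgebra and $\Lie{r}$ the $H$-graded solvable radical. The goal is to show that if $\Lie{p} \neq \Lie{g} + \Lie{g}_0$, that is, if $\Lie{g}_k \neq \{0\}$ for some $k \geq 1$, then $\Lie{p}$ is simple, noncompact, and of real rank $1$. I would first observe that $\Lie{g}_{-1}$ cannot lie in the radical $\Lie{r}$: indeed, by (P1)--(P2) and the bracket-generating property, $\Lie{g}_{-1}$ generates all of $\sum_{k \leq -1}\Lie{g}_k = \Lie{g}$, and if some $U \in \Lie{g}_k$ with $k \geq 1$ is nonzero, then iterated brackets with $\Lie{g}_{-1}$ eventually reach $\Lie{g}_{-1}$ itself (since $[\Lie{g}_0, \Lie{g}_{-1}] = \Lie{g}_{-1}$ and $[\Lie{g}_1, \Lie{g}_{-1}] \subseteq \Lie{g}_0$, etc.), so that $\Lie{g}_{-1}$ cannot be contained in a proper graded ideal that omits the positive part. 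More carefully: I would show that the ideal generated by $\Lie{g}_{-1}$ together with any nonzero $U \in \Lie{g}_k$, $k \geq 1$, is all of $\Lie{p}$; hence the radical, being a proper ideal when $\Lie{p}$ is not solvable, must have trivial intersection with $\Lie{g}_{-1}$, and therefore (since the radical is $H$-graded and by the remark that $\Lie{g}^{[j+]}$ is preserved) $\Lie{r} \cap \Lie{g}_{-1} = \{0\}$.

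Next I would pin down where $\Lie{g}_{-1}$ sits: since $\Lie{r} \cap \Lie{g}_{-1} = \{0\}$ and $\Lie{p} = \Lie{s} + \Lie{r}$ with the sum $H$-graded, we get $\Lie{g}_{-1} \subseteq \Lie{s}$; then, by bracket-generation and $H$-gradedness of $\Lie{s}$, also $\Lie{g}_k \subseteq \Lie{s}$ for all $k \leq -1$, so $\Lie{g} \subseteq \Lie{s}$. The positive part $\sum_{k \geq 1}\Lie{g}_k$ must also lie in $\Lie{s}$: any nonzero element $U$ of the positive part of $\Lie{r}$ would, by the maximality-type property (P2) and Lemma \ref{lem:conf-Lie-algebra-structure}, fail to centralise $\Lie{g}_{-1}$, and bracketing down from $U$ through $\Lie{g}_{-1}$ would produce a nonzero element of $\Lie{r} \cap \Lie{g}_{-1}$, a contradiction; so the positive part of $\Lie{r}$ vanishes. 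That leaves $\Lie{r} \subseteq \Lie{g}_0$. But $\Lie{g}_0 = \ConfDer(\Lie{g}) = \R H + \IsoDer(\Lie{g})$, and $\IsoDer(\Lie{g})$ consists of skew-symmetric derivations, which is a reductive (in fact compact-type) Lie algebra with no nonzero solvable ideal beyond its centre; combined with $[\Lie{g}_0, \Lie{g}_{-1}] = \Lie{g}_{-1}$ and $\ad$ being faithful, one sees $\Lie{r}$ can only be $\{0\}$ — any central-in-$\Lie{g}_0$ solvable piece would act on $\Lie{g}_{-1}$ by conformal derivations, and an abelian ideal of $\Lie{p}$ inside $\Lie{g}_0$ acting faithfully on a bracket-generating $\Lie{g}_{-1}$ is forced to be trivial because $\Lie{p}$ has trivial centre and the positive prolongation would clash with (P2). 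Hence $\Lie{p} = \Lie{s}$ is semisimple.

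Then I would promote "semisimple" to "simple of real rank $1$". For semisimplicity: decompose $\Lie{p} = \bigoplus_i \Lie{s}_i$ into simple graded ideals; since each $\Lie{s}_i$ is $H$-graded and $\Lie{g}_{-1}$ is bracket-generating, $\Lie{g}_{-1}$ meets at least one factor, and if it met two factors $\Lie{s}_1, \Lie{s}_2$ then, since $[\Lie{s}_1, \Lie{s}_2] = 0$, the corresponding negative parts would bracket to zero — contradicting that brackets of $\Lie{g}_{-1}$ generate $\Lie{g}_{-2}$ which is irreducible-ish; more simply, $\Lie{g}_{-1} \cap \Lie{s}_i$ generates an ideal of $\Lie{p}$ inside $\Lie{s}_i$, and summing over $i$ the ideal generated by all of $\Lie{g}_{-1}$ is all of $\Lie{p}$, so there can be only one factor, i.e.\ $\Lie{p}$ is simple. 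Noncompactness is automatic because $\Lie{p}$ has the nontrivial grading element $H$ with $\ad H$ having the eigenvalue $-1$ on the nonzero space $\Lie{g}_{-1}$, so $\ad H$ is a nonzero semisimple operator with real eigenvalues, which is impossible in a compact Lie algebra. For real rank $1$: the grading of $\Lie{p}$ is the eigenspace decomposition of $\ad H$, and since $\Lie{g}_{-1} = \Lie{g}_{-1}$ is the $(-1)$-eigenspace which generates the whole negative part, the restricted root system generated by $H$ has all negative roots supported in a half-line; a simple Lie algebra admitting a grading $\sum_{k=-s}^{t}\Lie{g}_k$ in which $\Lie{g}_{-1}$ is the full $(-1)$-piece and bracket-generates is exactly a $|k|$-graded algebra coming from a parabolic, and the condition that the stratification has a single horizontal layer forces the parabolic to be minimal, hence real rank $1$ (equivalently, $\Lie{g}$ is the nilradical of a minimal parabolic of a rank-one simple Lie algebra). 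The main obstacle I anticipate is this last step — ruling out, purely from the prolongation axioms (P1)--(P3) and $\Lie{g}_0 = \ConfDer(\Lie{g})$, that $\Lie{p}$ could be simple of higher real rank with $\Lie{g}_{-1}$ corresponding to a non-minimal parabolic; I expect this is handled by showing that if the real rank were $\geq 2$ then $\Lie{g}_0 = \ConfDer(\Lie{g})$ would be too small to be the reductive Levi part $\Lie{g}_0$ of the parabolic (the Levi would contain a semisimple part acting non-skew-symmetrically on $\Lie{g}_{-1}$, contradicting $\Lie{g}_0 \subseteq \R H + \IsoDer(\Lie{g})$), so the inner-product/conformal constraint on $\Lie{g}_0$ is exactly what pins the rank down to $1$.
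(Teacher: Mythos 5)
Your overall strategy (graded Levi--Malcev splitting, then kill the radical, then deduce simplicity and rank one from the fact that $\Lie{g}_0 = \R H + \IsoDer(\Lie{g})$) matches the paper's, but two of the load-bearing steps have genuine gaps. First, the triviality of the radical. Your deduction that $\Lie{r} \cap \Lie{g}_{-1} = \{0\}$ because ``the ideal generated by $\Lie{g}_{-1}$ and $U$ is all of $\Lie{p}$'' is a non sequitur: a proper ideal may perfectly well meet $\Lie{g}_{-1}$ in a nonzero proper subspace, and nothing in the prolongation axioms rules this out a priori. Even granting that step, you never dispose of $\Lie{r} \cap \Lie{g}_{-j}$ for $j \geq 2$ before asserting $\Lie{r} \subseteq \Lie{g}_0$, and your argument that a solvable ideal inside $\Lie{g}_0$ must vanish is a string of plausibilities rather than a proof. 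The paper kills the radical by a different mechanism: it takes a nontrivial abelian ideal $\Lie{a}$, uses the grading, the nilpotency of $\ad(\Lie{g}_{-1})$ on $\Lie{a}$ and property (P2) to produce a nonzero element of $\Lie{a} \cap \Centre(\Lie{g})$, decomposes $\Lie{a}$ into irreducible $\Lie{s}$-modules whose $H$-weights are symmetric under $j \mapsto -j$, and then invokes Lemma \ref{lem:conf-Lie-algebra-structure} --- the genuinely analytic input, proved via the isometry theorem, that a nonzero element of $\Centre(\Lie{g})$ cannot commute with a nonzero element of $\sum_{k\geq 1}\Lie{g}_k$ --- to contradict commutativity of $\Lie{a}$. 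You cite that lemma, but for a purpose it does not serve (it concerns central elements of $\Lie{g}$, not the centraliser of $\Lie{g}_{-1}$), and your route never reaches a configuration where it applies.

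Second, the real rank. Your principal argument --- that a single bracket-generating layer $\Lie{g}_{-1}$ forces the parabolic to be minimal and hence the rank to be one --- is false: the Borel subalgebra of $\Lie{sl}(3,\R)$ induces exactly such a $2$-step grading, with $N$ the Heisenberg group, on a rank-two simple algebra, and indeed every parabolic grading of a simple Lie algebra has $\Lie{g}_{-1}$ generating the negative part. Your fallback intuition is the correct one and is what the paper actually does: the centraliser of $H$ in $\Lie{p}$ is $\Lie{g}_0 = \R H + \IsoDer(\Lie{g})$, the Killing form is negative definite on $\IsoDer(\Lie{g})$ and positive definite on $\R H$, so a Cartan subalgebra through $H$ has a one-dimensional split part and a maximal $\R$-diagonalisable abelian subalgebra is just $\R H$; the same Killing-form computation shows there is exactly one noncompact simple summand, and (P2) eliminates the compact summands (they sit in $\Lie{g}_0$ and commute with $\Lie{g}_{-1}$, hence are zero derivations), which repairs your simplicity argument as well. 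Since you explicitly flag this step as unresolved rather than proving it, the proposal as written does not establish the theorem.
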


\begin{proof}
Suppose that $\Lie{g}_{1} = \{0\}$; then $\Lie{p} = \Lie{g} + \Lie{g}_0 = \Lie{g} + \ConfDer(\Lie{g})$.
Suppose that $\Lie{g}_{1} \neq \{0\}$; we will show that $\Lie{p}$ is simple of real rank $1$.

First, we split $\Lie{p}$ into a semisimple and a solvable part.
By Theorem \ref{thm:graded-Levi-splitting}, we may write $\Lie{p} =\Lie{s} + \Lie{r}$, where $\Lie{r}$ is the radical of $\Lie{p}$ and $\Lie{s}$ is semisimple; both $\Lie{r}$ and $\Lie{s}$ are $H$-graded.
As usual, we write $\Lie{r} = \sum_j \Lie{r}_j$.

Next, we show that $\Lie{r}_0 \subseteq \IsoDer(\Lie{g})$ and $H \in \Lie{s}$.
If $D \in \Lie{g}_0$, then $D = s H  + M$, where $M \in \IsoDer(\Lie{g})$ and $s \in \R$, and so $D$ is semisimple.
We claim that $D \notin \Lie{r}$ unless $s = 0$.
Indeed, the eigenvalues of $\ad(D)$ on $\Lie{g}_{j}$ are all of the form $x + i y$, where $x =- sj$ and $y$ is real; hence $\ad(D)$ is invertible on $\Lie{g}_{j}$ when $j \neq 0$ and $s \neq 0$.
If it were true that $D \in \Lie{r}$, then it would follow that $\Lie{g}_j \subset \Lie{r}$ whenever $j \neq 0$, since $\Lie{r}$ is an ideal.
Since $\Lie{g}_{1} \neq \{0\}$, we can find $X \in \Lie{g}_1$ and $Y \in \Lie{g}_{-1}$ such that $[X,Y] \neq 0$.
Then $[X,Y]$ would be both semisimple (since $[X,Y] \in \Lie{g}_0$) and nilpotent (since $[X,Y] \in [\Lie{r}, \Lie{r]}$); this is absurd, and so $D \notin \Lie{r}$.
We deduce that $\Lie{r}_0 \subseteq \IsoDer(\Lie{g})$ and $H \in \Lie{s}$.

Third, we write $\Lie{s}$ as $\Lie{s}_0 \oplus \Lie{s}_1\oplus \dots \oplus  \Lie{s}_\ell$, where each $\Lie{s}_j$ is simple; we will show that exactly one of the $\Lie{s}_j$ is noncompact.
Every semisimple element $S$ of a semisimple Lie algebra $\Lie{s}$ lies in a Cartan subalgebra; 
indeed, by \cite[Section 2, Proposition 10]{Bourbaki-GAL7}, a Cartan subalgebra of the commutant of $S$, which must contain $S$, is a Cartan subalgebra of $\Lie{s}$.
We take a Cartan subalgebra $\Lie{h}$  of $\Lie{s}$ that contains $H$.
Then $\Lie{h} \subseteq \Lie{g}_0$ since $[H, X ] = 0$ for all $X \in \Lie{h}$.
We may write
\[
\Lie{h} = \lpar\Lie{h} \cap \Lie{s}_0\rpar \oplus \lpar\Lie{h} \cap \Lie{s}_1\rpar \oplus \dots \oplus \lpar\Lie{h} \cap \Lie{s}_\ell\rpar,
\]
where each $\Lie{h} \cap \Lie{s}_j$ is a Cartan subalgebra of $\Lie{s}_j$.
Consider the Killing form of $\Lie{s}$; this must be negative definite on $\IsoDer(\Lie{g})$ and positive definite on $\R H$.
If there were several noncompact summands $\Lie{s}_j$, then there would be a subspace of $\Lie{h}$ on which the Killing form is positive definite of dimension at least $2$.
Thus there must be one noncompact summand, $\Lie{s}_0$ say, which contains $H$, and all the other summands are compact.
Further, $\Lie{s}_0$ is of real rank one since the centraliser of $H$ in $\Lie{s}_0$ is $\R H + \IsoDer(\Lie{g}) \cap \Lie{s}_0$, and $\IsoDer(\Lie{g}) \cap \Lie{s}_0$ is compact.

Now, we show that $\Lie{r}$ is trivial, arguing by contradiction.
Supposing otherwise, $\Lie{g}$ contains a nontrivial abelian ideal $\Lie{a}$.
Since $\Lie{a}$ is an ideal, $\ad(\Lie{g}_{-1})^k \Lie{a} \subseteq \Lie{a}$.
Because $\Lie{g}$ is nilpotent, there must be some positive integer $k$ such that $\ad(\Lie{g}_{-1})^{k-1} \Lie{a} \neq \{0\}$ while $\ad(\Lie{g}_{-1})^{k} \Lie{a} = \{0\}$.
Thus $\Lie{a} \cap \Centre(\Lie{g}) \neq\{0\}$.
Under the adjoint action of $\Lie{s}$, we can split $\Lie{a}$ into $\Lie{s}$-invariant subspaces, on each of which $\Lie{s}$ acts by an irreducible finite-dimensional representation.
By taking a suitable subspace $\Lie{v}$, we may suppose that $ \Lie{v} \cap \Centre(\Lie{g}) \neq \{0\}$.

The subspace $\Lie{v}$ is abelian because $\Lie{a}$ is, and decomposes into weight spaces for the Cartan subalgebra $\Lie{h}$; the set of weights is closed under multiplication by $-1$, by representation theory.
Since $H \in \Lie{h}$ and $\ad(H)$, acting on $\Lie{p}$, is diagonalizable with integer eigenvalues, we may write
\[
\Lie{v} = \sum_{j = -k}^k \Lie{v}_{j},
\]
where $\Lie{v}_j = \Lie{v} \cap \Lie{g}_j$.
We may suppose that $\Lie{v}_{-k} \cap \Centre(\Lie{g}) \neq  \{0\}$.
By Lemma \ref{lem:conf-Lie-algebra-structure}, $[\Lie{v}_{-k},\Lie{v}_{k}] \neq \{0\}$, which contradicts the fact that $\Lie{v}$ is abelian.
Thus $\Lie{r}$ must be trivial, and $\Lie{p} = \Lie{s}$.

Finally, we show that $\Lie{s}$ is simple.
On the one hand, $\Lie{s}_j \subseteq \Lie{g}_0$  when $j \neq 0$.
On the other hand, $\Lie{g}_{-1} \subseteq \Lie{s}$, and so $\Lie{g}_{-1} \subseteq \Lie{s}_0$.
The elements of $\Lie{s}_j$, where $j > 0$, commute with those of $\Lie{s}_0$, and so $[\Lie{s}_j, \Lie{g}_{-1}] = \{0\}$.
This contradicts the definition of $\Lie{g}_0$ as a set of derivations of $\Lie{g}$, unless $\Lie{s}_j = \{0\}$.
\end{proof}

\section{Conformal mappings on Carnot groups}
In this section, we state and prove our characterization of conformal mappings of Carnot groups, which is a consequence of Theorem \ref{theorem-algebra}.
Before we do this, we recall two facts that we will need.

First, if $S$ is a simple Lie group of real rank $1$, then $S$ has subgroups $N$, $\bar N$, $M$ and $A$ such that $N M A \bar N$ is an open dense submanifold of $S$, whose complement is of the form $wMA\bar N$ for a particular element $w$ of $S$.
Thus we may identify the quotient space $S/MA\bar N$ with the one-point compactification $N \cup \{\infty\}$, and $S$ acts on this space on the left.
The nilpotent group $N$ is a Carnot group, and the action of $S$ on $N \cup \{\infty\}$ is conformal.

Second, the nilradical is the maximal nilpotent ideal of a Lie algebra.
Every  Lie algebra homomorphism sends the nilradical into the nilradical, so every Lie algebra automorphism preserves the nilradical.

\begin{theorem}
Let $G$ be a Carnot group, let $\Omega$ be a connected open  subset of $G$, and let $f:\Omega\rightarrow G$ be a conformal mapping.
Then there are two possibilities:
\begin{enumerate}[(i)]
\item
$G$ is not the Iwasawa $N$ group of a real-rank-one simple Lie group $S$; in this case, $f$ is the restriction to $\Omega$ of a conformal affine map of $G$, and extends analytically to a conformal map on $G$.
\item
$G$ is the Iwasawa $N$ group of a real-rank-one simple Lie group $S$; in this case, $f$ is the restriction to $\Omega$ of the action of an element of $S$ on $N \cup\{\infty\}$; thus $f$ extends analytically to a conformal map on $G$ or $G \setminus\{p\}$ for some point $p$.
\end{enumerate}
\end{theorem}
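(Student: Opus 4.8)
The plan is to combine Theorem~\ref{theorem-algebra} with a finite-type rigidity argument: one matches $f$ locally with a suitable ``model'' conformal map, and then shows the two agree. Throughout, we use that a $C^1$ conformal map of a Carnot group is smooth, indeed real analytic, so that it is Pansu differentiable everywhere and its Pansu differential depends real-analytically on the point; this is the regularity result of Capogna and Cowling \cite{Capogna-Cowling}. By Theorem~\ref{theorem-algebra}, exactly one of two things happens. Either $\Lie{g}_1 = \{0\}$, equivalently $\Lie{p} = \Lie{g} + \Lie{g}_0 = \Lie{g} + \ConfDer(\Lie{g})$; since $\Lie{g}$ is then a nonzero nilpotent ideal of $\Lie{p}$, the latter is not simple, $G$ is not the Iwasawa $N$ group of a real-rank-one simple Lie group, and we are in case~(i). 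Or $\Lie{g}_1 \neq \{0\}$, and then $\Lie{p}$ is noncompact simple of real rank $1$; here $\Lie{g} = \sum_{k<0}\Lie{g}_k$ is the nilradical of the parabolic subalgebra $\sum_{k\leq 0}\Lie{g}_k$ of $\Lie{p}$, which is minimal because $\Lie{p}$ has real rank one, so $G = \mathrm{exp}(\Lie{g})$ is the Iwasawa $N$ group of the simple Lie group $S$ with Lie algebra $\Lie{p}$, and we are in case~(ii). (That this identification is canonical---in particular, that the given metric on $\Lie{g}_{-1}$ matches the one induced by the Killing form of $\Lie{p}$---uses that Lie algebra automorphisms preserve nilradicals.)

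Next I set up the model maps and a normalisation. Let $t \geq 0$ be the top degree of the grading of $\Lie{p}$, so that $t = 0$ in case~(i) while $t$ equals the step $s$ of $G$ in case~(ii). In case~(i) the model maps are the conformal affine maps $L_q \circ \alpha$, with $q \in G$ and $\alpha \in \ConfAut(G)$, which are conformal diffeomorphisms of $G$; in case~(ii) the model maps are the restrictions to open subsets of $G$ of the conformal action on $N \cup \{\infty\}$ of the elements of $S$, each of which extends to a conformal diffeomorphism of $G$, or of $G \setminus \{p\}$ for a single point $p$, onto an open subset of $G$. Fix $p_0 \in \Omega$. By Ottazzi and Warhurst \cite{Ottazzi_Warhurst}, the conformal maps form a Lie pseudogroup of finite type with $\Lie{p}$ as its Lie algebra of conformal vector fields; consequently every $(t+1)$-jet at $p_0$ of a conformal germ is the $(t+1)$-jet of a model map. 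In case~(i) only the $1$-jet is relevant, and one takes $F = L_{f(p_0)} \circ \alpha \circ L_{p_0}^{-1}$ with $\alpha \in \ConfAut(G)$ having differential $df(p_0)$; in case~(ii) one matches the full $(t+1)$-jet, using that the stabiliser in $S$ of $p_0 \in N \subseteq N \cup \{\infty\}$ is the parabolic $\mathrm{exp}\bigl(\sum_{k \geq 0}\Lie{g}_k\bigr)$, and allowing, if necessary, for the finitely many components of the conformal group of $N \cup \{\infty\}$. Replacing $f$ by $F^{-1} \circ f$ and conjugating by a left translation to place $p_0$ at $e$, it remains to prove: a conformal germ $g$ at $e$ with $g(e) = e$ and trivial $(t+1)$-jet at $e$ is the germ of the identity.

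This rigidity step is the heart of the matter. For $\tau > 0$ put $g_\tau = \delta_\tau^{-1} \circ g \circ \delta_\tau$. Each dilation is a conformal automorphism of $G$, so $g_\tau$ is conformal; it fixes $e$; since the $(t+1)$-jet of $g$ at $e$ is trivial, so is that of $g_\tau$; its domain $\delta_\tau^{-1}(\Dom g)$ exhausts $G$ as $\tau \to 0^+$; and $g_\tau \to \id$ uniformly on compact sets, by Pansu differentiability of $g$ at $e$ and the triviality of its $1$-jet. Now, the conformal maps being a Lie pseudogroup with Lie algebra $\Lie{p}$, the conformal germs that fix $e$ form a local Lie group whose Lie algebra is the isotropy subalgebra $\sum_{k \geq 0}\Lie{g}_k$ of $\Lie{p}$ at $e$ (in case~(i) this is $\Lie{g}_0$). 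An element $\sum_{k \geq 0} X_k$ of this subalgebra whose flow has trivial $(t+1)$-jet at $e$ must have every $X_k = 0$: the component $X_0 \in \Lie{g}_0$ is detected already by the $1$-jet, and each $X_k$ with $1 \leq k \leq t$ is detected by the $(k+1)$-jet. Hence the conformal germs fixing $e$ with trivial $(t+1)$-jet at $e$ form a discrete subgroup, so $g_\tau$ is the identity germ once $\tau$ is small, and therefore $g$ is the identity germ. Thus $f$ agrees with the model map $F$ near $p_0$; since the set of points of $\Omega$ near which $f$ agrees with $F$ is open, nonempty, and---running the same normalisation and rigidity at each point, using continuity of $f$ and its derivatives---closed in $\Omega$, we conclude $f = F$ on all of $\Omega$. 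As $F$ is a conformal diffeomorphism of $G$ in case~(i), and extends to a conformal diffeomorphism of $G$ or of $G \setminus \{p\}$ onto an open subset of $G$ in case~(ii) (the values of $f$ lying in $G$ forcing $p \notin \Omega$), this is precisely the asserted analytic extension.

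I expect the main obstacle to be the rigidity step: converting the infinitesimal information from Theorem~\ref{theorem-algebra} and \cite{Ottazzi_Warhurst}---that the conformal vector fields form the finite-dimensional graded Lie algebra $\Lie{p}$---into the local statement that a conformal map with trivial $(t+1)$-jet at a point is locally the identity. This rests on the conformal maps forming a genuine finite-dimensional Lie pseudogroup, which in turn depends on the regularity of conformal maps and on the finiteness of the Tanaka prolongation. A secondary technical nuisance is the bookkeeping with the components of $\ConfAut(\Lie{g})$ and the precise identification of $G$ with an Iwasawa $N$ group in case~(ii).
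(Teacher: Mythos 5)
Your overall architecture (match a model map at one point, then prove a conformal germ with trivial finite jet is the identity) is a legitimate classical strategy, and it is genuinely different from the paper's, which never analyses jets: the paper conjugates the flows of conformal vector fields by $f$ to obtain an automorphism $\breve f$ of the finite-dimensional algebra $\Lie{p}$, and then recovers $f$ pointwise from the identity $\flow{\breve f(U)}(e) = f(\flow{U}(e))$ applied to $U \in \Lie{g}$ (case (ii) being delegated to \cite{CDKR}). However, your version has a genuine gap exactly where you predict one: the rigidity step. You assert that the conformal germs fixing $e$ form a local Lie group with Lie algebra $\sum_{k\geq 0}\Lie{g}_k$, and that those with trivial $(t+1)$-jet therefore form a \emph{discrete} subgroup. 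Neither claim follows from \cite{Ottazzi_Warhurst}, which concerns conformal \emph{vector fields} only: a priori there may exist conformal germs that do not lie on the flow of any conformal vector field, and for such germs the finite-dimensionality of $\Lie{p}$ says nothing. The statement ``a conformal germ is determined by its $(t+1)$-jet'' is precisely the unique-continuation content of the theorem; deducing it from ``the subgroup is discrete'' is circular, since discreteness of that subgroup (in a topology in which your convergence $g_\tau\to\id$ is meaningful and detects it) \emph{is} the determination of germs by jets. The same issue infects the earlier claim that every $(t+1)$-jet of a conformal germ at $p_0$ is the jet of a model map: in case (ii) this requires knowing that higher-order isotropy jets are exhausted by $\exp\bigl(\sum_{k\geq 1}\Lie{g}_k\bigr)$, which again presupposes the rigidity you are trying to prove.

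To close the gap you would need either (a) a genuine finite-type argument (a Cartan connection or an absolute parallelism on a prolonged bundle, as in Tanaka's theory, on whose frame bundle a conformal map acts and is determined by one frame), or (b) a compactness/renormalisation argument showing that if $g\neq\id$ then a suitable rescaling of $g_\tau - \id$ produces a nonzero conformal vector field with vanishing $(t+1)$-jet, contradicting the structure of $\Lie{p}$ --- and that rescaling limit requires normal-family estimates for conformal maps that you have not supplied. The paper's route avoids all of this: since conjugation by $f$ of a conformal flow is again a conformal flow, $\breve f$ is automatically a well-defined automorphism of $\Lie{p}$ preserving the isotropy subalgebra $\Lie{p}_e$ and the nilradical $\Lie{g}$, and equation \eqref{eq:breve-f} reconstructs $f$ on the $\Lie{g}$-orbit of $e$, i.e.\ on all of $\Omega$, with no jet analysis and no topology on germs. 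I recommend either adopting that mechanism or supplying a complete proof of the jet-determinacy statement before relying on it.
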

\begin{proof}
Let us first observe that a polynomial vector field is determined by its restriction to any nonempty open subset of $G$; consequently, we do not need to distinguish between such vector fields on $G$ and their restrictions to open subsets.

Suppose that $f: \Omega \to G$ is conformal.
By composing with left translations, we may suppose that $e \in \Omega$ and $f(e) = e$.

Each conformal vector field $\breve U$ on $G$ induces a local flow $\flow{U}$ on a neighbourhood of $e$ in $\Omega$; we define a new conformal local flow $\psi_t$ on a neighbourhood of $e$ by conjugation
\begin{equation}\label{eq:conjuation-by-f}
\psi_t = f \circ \flow{U} \circ f^{-1}.
\end{equation}
Differentiation with respect to $t$ yields a new conformal vector field $\breve V$ on a neighbourhood of $e$ such that $\psi_t = \flow{V}$.
Further, if $\flow{U}$ fixes $e$ for all $t$, then so does $\flow{V}$.
The map $\breve f: U \mapsto V$ is an automorphism of the prolongation algebra $\Lie{p}$, and leaves invariant the subalgebra $\Lie{p}_e$ of $\Lie{p}$ corresponding to vector fields that vanish at $e$.
From \eqref{eq:conjuation-by-f}, we see that
\begin{equation}\label{eq:breve-f}
\flow{\breve f(U)} (e) =  f( \flow{U} (e)).
\end{equation}

Assume now that $G$ is not the Iwasawa $N$ group of a real-rank-one simple Lie group $S$.
By Theorem \ref{theorem-algebra}, the Lie algebra of conformal vector fields is isomorphic to $\Lie{g} + \Lie{g}_0$.
The nilradical of $\Lie{g} + \Lie{g}_0$ is $\Lie{g}$, and so $\breve f$ preserves $\Lie{g}$; the subalgebra $\Lie{p}_e$ is equal to $\Lie{g}_0$, so $\breve f$ also preserves $\Lie{g}_0$.
Clearly there is an automorphism $T$ of $G$, not necessarily strata-preserving, such that $dT = \breve f$.

Now $\flow{W}(e) = \exp(-t W)$ for all $W \in \Lie{g}$.
From \eqref{eq:breve-f}, it follows that
\[
 f(\exp(-t U)) = \exp(-t \breve f (U)) = T \exp(-t U)
\]
for all $U$ in $\Lie{g}$ and all $t \in  \R$, that is, $f$ is an automorphism.
Since $f$ is also a contact map, the automorphism is strata-preserving, by Lemma \ref{lem:good-automorphisms}, and since $f$ is conformal, the automorphism is in $\ConfAut(\Lie{g})$.

The case where $G$ is the Iwasawa $N$ group of a real-rank-one simple Lie group $S$ is essentially contained in \cite[Section 5]{CDKR}; in that paper, the assumption of real rank at least two serves only to ensure that the space of vector fields appropriate to the mappings under consideration is finite-dimensional.
\end{proof}

\begin{remarks}
The result immediately generalize to 1-quasiconformal maps, thanks to \cite{Capogna-Cowling}.
The hypothesis that $\Omega$ is connected is essential, in the sense that if $f_1: \Omega_1 \to G$ and   $f_2: \Omega_2 \to G$ are conformal, and $\Omega_1$ and $\Omega_2$ are disjoint, then $f$ is conformal; however, an analytic extension to $G$ is not possible.
The hypothesis that $G$ is a Carnot group is also important, since on compact Lie groups with a bi-invariant Riemannian metric, inversion is conformal but not affine.
If $G$ is a stratified group with a sub-Finsler metric, then a similar result holds; much as argued in \cite{LeDonne-Ottazzi}, a sub-Finsler conformal map is also a sub-Riemannian conformal map for an appropriate metric.
\end{remarks}

\def\cprime{$'$} \def\cprime{$'$}
\providecommand{\bysame}{\leavevmode\hbox to3em{\hrulefill}\thinspace}
\renewcommand{\MR}[1]{\relax{}}

\providecommand{\MRhref}[2]{%
  \href{http://www.ams.org/mathscinet-getitem?mr=#1}{#2}
}
\providecommand{\href}[2]{#2}

\end{document}